\newcommand{\tpdf}{\texorpdfstring}
\newcommand{\Bohr}{\operatorname{Bohr}}
\newcommand{\mb}{\mathbf}
\newcommand{\supp}{\operatorname{supp}}
\newtheorem{theorem}{Theorem}
\newtheorem{lemma}[theorem]{Lemma}
\newtheorem{proposition}[theorem]{Proposition}
\newtheorem{corollary}[theorem]{Corollary}
\numberwithin{theorem}{section}
\numberwithin{equation}{section}
\theoremstyle{definition}
\newtheorem{observation}[theorem]{Observation}
\newtheorem{remark}[theorem]{Remark}
\title[Bohr nhoods in generalized difference sets]{Bohr neighborhoods in generalized difference sets}
\author{John T. Griesmer}
\email{jtgriesmer@gmail.com}
\begin{document}
	
\begin{abstract}
  If $A$ is a set of integers having positive upper Banach density and $r,s,t$ are nonzero integers whose sum is zero, a theorem of Bergelson and Ruzsa says that the set $rA+sA+tA:=\{ra_1+sa_2+ta_3:a_i\in A\}$ contains a Bohr neighborhood of zero.  We prove the natural generalization of this result for subsets of countable abelian groups and more summands.
\end{abstract}

	\maketitle

\section{Bohr neighborhoods in iterated difference sets}\label{sec:Intro}

Let $\mathcal S^1:=\{z\in \mathbb C: |z|=1\}$ be the group of complex numbers with modulus $1$, with the usual topology and the operation of multiplication.

Let $G$ be a topological abelian group.  A \emph{character} of $G$ is a continuous homomorphism $\chi:G\to \mathcal S^1$.  Of course, when $G$ is discrete, every such homomorphism is continuous.  A \emph{trigonometric polynomial} is linear combination of characters, i.e. a function of the form $\sum_{j=1}^d c_j\chi_j$, where $\chi_j\in \widehat{G}$.  A function $\psi$ on $G$ is \emph{uniformly almost periodic} if it is a uniform limit of trigonometric polynomials.

The set $\widehat{G}$ of characters of $G$ form an abelian group under  pointwise multiplication. When $G$ is discrete and $\widehat{G}$ is given the topology of pointwise convergence, $\widehat{G}$ is a compact abelian group.  When $G$ is compact and $\widehat{G}$ is given the topology of uniform convergence, $\widehat{G}$ is discrete.

Let $d\in \mathbb N$ and $\chi_1,\dots,\chi_d\in \widehat{G}$. The \emph{Bohr neighborhood of $0$} in $G$ having rank $d$ and radius $\varepsilon$ determined by $\{\chi_1,\dots,\chi_d\}$ is
\[
\Bohr(\chi_1,\dots,\chi_d;\varepsilon):=\{g: |\chi_i(g)-1|<\varepsilon, 1\leq i \leq d\}.
\]
\begin{remark}\label{rem:APtoBohr} If $\psi$ is a real valued trigonometric polynomial with $\psi(0)>0$, it is easy to verify that $\supp(\phi):=\{g\in G:\psi(g)\neq 0\}$ contains a Bohr neighborhood of $0$.  Since almost periodic functions may be uniformly approximated by trigonometric polynomials, the same holds when $\psi$ is uniformly almost periodic.
\end{remark}

A classical theorem of Bogoliouboff \cite{Bogoliouboff} states that if $G$ is a countable abelian group and $G = C_1\cup C_2\cup \dots \cup C_r$ is a finite cover of $G$, then for at least one $i$, the iterated difference set $(C_i-C_i)-(C_i-C_i):=\{(x-y)-(z-w): x,y,z,w\in C_i\}$ contains a Bohr neighborhood of $0$ with radius and rank depending only on $r$.  Note that Bogoliouboff's theorem implies that if finitely many translates of $C$ cover $G$, then $(C-C)-(C-C)$ contains a Bohr neighborhoof of $0$.   F{\o}lner \cite{Folner} generalizes this to say that $(A-A)-(A-A)$ contains a Bohr neighborhood of $0$ under the weaker hypothesis that $d^*(A)>0$, where $d^*$ denotes upper Banach density (see \S\ref{sec:Densities} for the definition).  See Theorem 5.7.2 of Part II of \cite{RuzsaBook} for exposition of Bogoliouboff's and F{\o}lner's results.

Bergelson and Ruzsa extended F{\o}lner's theorem as follows.
\begin{theorem}[\cite{BergelsonRuzsa}, Theorem 6.1]\label{thm:BergelsonRuzsa}
Let $r, s, t$ be nonzero integers with $r+s+t=0$, and let $A\subset \mathbb Z$ have $d^*(A)>0$.  Then $rA+sA+tA:=\{ra_1+sa_2+ta_3: a_i\in A\}$ contains a Bohr neighborhood of $0$, with rank and radius depending only on $r, s, t$, and $d^*(A)$.
\end{theorem}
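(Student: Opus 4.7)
The plan is to combine Furstenberg's correspondence principle with an analysis on the Kronecker (maximal almost periodic) factor of the resulting measure-preserving system. Applying Furstenberg's correspondence principle to $A$ yields a measure-preserving system $(X,\mathcal B,\mu,T)$ and a set $B\in\mathcal B$ with $\mu(B)=d^*(A)$ satisfying $d^*\bigl(A\cap(A-p)\cap(A-q)\bigr)\ge\mu(B\cap T^{-p}B\cap T^{-q}B)$ for all $p,q\in\mathbb Z$. Since $t=-(r+s)$, whenever $\mu(B\cap T^{-p}B\cap T^{-q}B)>0$ we may select $a\in A$ with $a+p,a+q\in A$ and observe $r(a+p)+s(a+q)+ta=rp+sq$; hence it suffices to show that
\[
S:=\{rp+sq:\mu(B\cap T^{-p}B\cap T^{-q}B)>0\}
\]
contains a Bohr neighborhood of $0$ in $\mathbb Z$.

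Let $\mathcal K$ be the Kronecker factor and set $\psi:=\mathbb E[1_B\mid\mathcal K]$, $\eta:=1_B-\psi$. Then $0\le\psi\le 1$, $\int\psi\,d\mu=\mu(B)$, the orbit $\{T^n\psi\}_{n\in\mathbb Z}$ is precompact in $L^2(\mu)$, and $\eta$ is orthogonal to every eigenfunction of $T$. Writing $\psi$ as an $L^2$-limit of linear combinations of eigenfunctions and invoking H\"older's inequality shows that
\[
F(p,q):=\int\psi\cdot T^p\psi\cdot T^q\psi\,d\mu
\]
is a uniform limit on $\mathbb Z^2$ of trigonometric polynomials in $(p,q)$, and $F(0,0)=\int\psi^3\,d\mu\ge\mu(B)^3>0$ by Jensen. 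Remark~\ref{rem:APtoBohr} applied to $F-F(0,0)/2$ on $\mathbb Z^2$ shows that $\{(p,q):F(p,q)>F(0,0)/2\}$ contains a Bohr neighborhood of $(0,0)$ in $\mathbb Z^2$, which may be shrunk to a product $W\times W$ for some Bohr neighborhood $W\subset\mathbb Z$ of $0$.

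Expanding $\mu(B\cap T^{-p}B\cap T^{-q}B)$ via $1_B=\psi+\eta$ produces $F(p,q)$ plus seven cross terms, each containing a factor of $\eta$. Since $\eta\perp\mathcal K$ (so $\eta$ is $U^2$-uniform in the Host--Kra sense), a Cauchy--Schwarz/van der Corput estimate, combined with a trigonometric-polynomial approximation $\psi=P+\psi^\flat$ with $\|\psi^\flat\|_2$ arbitrarily small, bounds each cross term in absolute value by $F(0,0)/16$ on a further Bohr refinement $W'\subset W$ of $0$. Thus $\mu(B\cap T^{-p}B\cap T^{-q}B)>0$ throughout $W'\times W'$. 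To finish, a Diophantine approximation argument shows $\{rp+sq:p,q\in W'\}$ contains a Bohr neighborhood of $0$ in $\mathbb Z$: writing $W'=\{m:\|m\theta_j\|_{\mathbb R/\mathbb Z}<\varepsilon,\ 1\le j\le D\}$ and $d:=\gcd(r,s)$, one parametrizes solutions of $rp+sq=n$ (for $n\in d\mathbb Z$) as $p=p_0(n)+(s/d)k$, $q=q_0(n)-(r/d)k$, and uses that $\{k\in\mathbb Z:\|k\theta_j\|<\delta\text{ for all }j\}$ is itself a nonempty Bohr neighborhood to select $k$ placing $(p,q)\in W'\times W'$ whenever $n$ lies in a sufficiently small Bohr neighborhood of $0$ inside $d\mathbb Z$.

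The main obstacle is the \emph{pointwise} control of the seven error terms: $U^2$-uniformity of $\eta$ ordinarily ensures smallness of its cubic correlations only on average over $(p,q)$ in large boxes, whereas here one needs uniform estimates along each affine line $\{rp+sq=n\}$. The trigonometric-polynomial approximation $\psi\approx P$ resolves this by reducing the cross terms to finitely many correlations of $\eta$ against explicit eigenfunctions of $T$, each of which vanishes by orthogonality, up to an $L^2$-small remainder absorbable into the constant $F(0,0)/2$.
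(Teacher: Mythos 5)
There is a genuine gap, and it sits exactly where you flag the ``main obstacle'': the pointwise control of the cross terms fails, and in fact the intermediate claim it is meant to establish is false. Your argument would conclude that $\mu(B\cap T^{-p}B\cap T^{-q}B)>0$ for \emph{every} $(p,q)\in W'\times W'$ with $W'$ a Bohr neighborhood of $0$. Taking $q=0$ (which lies in $W'$) this gives $\mu(B\cap T^{-p}B)>0$, hence $p\in A-A$, for every $p\in W'$; so $A-A$ would contain a Bohr neighborhood of $0$ whenever $d^*(A)>0$. That contradicts the Kriz--Forrest examples cited in Remark 1.5(i). The specific step that breaks is the treatment of the cross terms containing \emph{two or three} factors of $\eta$. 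Your fix (replace $\psi$ by a trigonometric polynomial $P$ and use $\eta\perp\mathcal{AP}$) does dispose of the terms that are linear in $\eta$, since $T^pP\cdot T^qP$ is again a combination of eigenfunctions. But a term such as $\int \eta\cdot T^p\eta\cdot T^qP\,d\mu$ reduces to correlations of the form $\lambda^q\int (\eta\chi)\cdot T^p\eta\,d\mu$, which is a correlation of the weakly mixing function $\eta$ against a fixed bounded function; by Lemma \ref{lem:WM} this tends to $0$ only \emph{in density} in $p$, not pointwise, and no Bohr refinement can avoid the exceptional set of $p$'s. No amount of Cauchy--Schwarz or van der Corput converts density-zero failure into pointwise smallness on a Bohr set.

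The paper circumvents precisely this obstruction by never asking for pointwise positivity of the triple correlation over a Bohr box. Instead (Theorem \ref{thm:DynamicalVersion}) it \emph{averages} the correlation $L(g_1,\dots,g_{d-1};t)$ over the auxiliary parameters $g_1,\dots,g_{d-1}$ along F{\o}lner sets; van der Corput and Lemma \ref{lem:RothReduce} kill the weakly mixing contributions \emph{in the average}, and the resulting function $I(t)$ of the single remaining variable $t$ is almost periodic (its $t$-dependence factors through the Kronecker factor) with $I(0)>0$. Positivity of $I(t)$ on a Bohr set then yields, for each such $t$, \emph{some} admissible tuple of auxiliary parameters with $L>0$, which is all that is needed to place $c_dt$ in $\vec{c}\cdot A$. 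Your first two reductions (correspondence principle; the algebraic identity $r(a+p)+s(a+q)+ta=rp+sq$; almost periodicity and positivity of $F(p,q)=\int\psi\,T^p\psi\,T^q\psi\,d\mu$) are sound and parallel the paper's, but to complete the proof you must replace ``positivity at every point of $W'\times W'$'' by ``positivity of an average over one of the two variables,'' and then extract the Bohr neighborhood from the almost periodic averaged quantity rather than from the final Diophantine selection step.
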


Our main result is Theorem \ref{thm:GeneralBogoliouboff}, a partial generalization of Theorem \ref{thm:BergelsonRuzsa} to arbitrary countable abelian groups and more summands.  We say ``partial generalization'' because we do not bound the rank and radius of the Bohr neighborhood.

Let $G$ be a countable (discrete) abelian group.  Let $q$ be the exponent of $G$, meaning $q$ is the least $m\in \mathbb N$ such that $mg=0$ for every $g\in G$.  If there is no such $m$, we say $G$ has exponent $0$.

If $\vec{c}=(c_1,\dots,c_d)\in \mathbb Z^d$ and $A\subset G$, we define
\[\vec{c}\cdot A:=\Bigl\{\sum_{i=1}^d c_ia_i: a_1,\dots,a_d\in A \text{ \textup{are mutually distinct}}\Bigr\}\cup\{0\}\]
\begin{theorem}\label{thm:GeneralBogoliouboff}
  Let $d\in \mathbb N$, $d\geq 3$, and let $c_1,\dots, c_d\in \mathbb Z$ be such that
  \begin{enumerate}
  \item $c_iG$ has finite index in $G$ for each $i$,
  \item\label{hyp:cicj}  $(c_{d-1}+c_{d})G$, has finite index in $G$,  and
   \item $q$ divides $\sum c_i$.
\end{enumerate}
If $A\subset G$ has $d^*(A)>0$, then $\vec{c}\cdot A$ contains a Bohr neighborhood of $0$ in $G$.
\end{theorem}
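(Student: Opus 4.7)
The plan is to use Furstenberg's correspondence principle to pass to an ergodic-theoretic formulation and then extract the Bohr neighborhood via Kronecker-factor analysis, proceeding by induction on $d$. First, produce a probability measure-preserving $G$-system $(X,\mathcal{B},\mu,(T_g)_{g\in G})$ and a set $B\subset X$ with $\mu(B)=d^*(A)$ such that
\[
d^*\Bigl(A\cap\bigcap_i (A-h_i)\Bigr)\geq \mu\Bigl(B\cap\bigcap_i T_{-h_i}B\Bigr)
\]
for every finite tuple $(h_i)\subset G$. In particular, a common $a_d\in A$ with $a_d+h_i\in A$ for all $i$ exists whenever the right-hand correlation is positive, so the combinatorial problem becomes measure-theoretic.

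Next, exploit hypothesis (3): because $q$ is the exponent of $G$ and $q\mid\sum c_i$, we have $(\sum c_i)a_d=0$, so
\[
\sum_{i=1}^{d}c_i a_i=\sum_{i=1}^{d-1}c_i(a_i-a_d).
\]
Setting $h_i:=a_i-a_d$, it suffices to show that for every $g$ in some Bohr neighborhood of $0$ there exist mutually distinct nonzero $h_1,\dots,h_{d-1}\in G$ with $\sum c_i h_i=g$ and $\mu(B\cap\bigcap T_{-h_i}B)>0$.

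For the base case $d=3$, the correlation $\phi(h_1,h_2):=\mu(B\cap T_{-h_1}B\cap T_{-h_2}B)$ is controlled by the Kronecker factor $Z_1$: replacing $\mathbf{1}_B$ by its conditional expectation onto $Z_1$ yields a continuous function on the compact abelian group $Z_1^2$, whose value at $(0,0)$ is at least $\mu(B)^3>0$, hence is positive on a neighborhood. The linear form $c_1h_1+c_2h_2$ factors through $Z_1$, and hypothesis (1) ensures that its image from such a neighborhood covers a Bohr neighborhood of $0$ in $G$ (since each $c_iG$, being a finite-index subgroup, is itself a Bohr set); Remark \ref{rem:APtoBohr} then delivers the desired conclusion. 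For the inductive step, I would combine the last two variables via
\[
c_{d-1}a_{d-1}+c_d a_d=c_{d-1}(a_{d-1}-a_d)+(c_{d-1}+c_d)a_d,
\]
with hypothesis (2) guaranteeing that the new coefficient $c_{d-1}+c_d$ is admissible, and reduce to a $(d-1)$-variable problem on a trace set of $A$.

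The principal obstacle is twofold. First, one must propagate all three hypotheses through the inductive reduction; in particular, the new $(d-1)$-tuple $(c_1,\dots,c_{d-2},c_{d-1}+c_d)$ must itself satisfy the analogue of (2), and this is the delicate bookkeeping that hypothesis (2) was formulated to support. Second, the distinctness-and-nonvanishing requirement on the $h_i$ forces one to show that "diagonal" contributions to the correlation are negligible compared with the off-diagonal Bohr structure coming from the Kronecker factor. For $d\geq 4$ the raw multiple correlation has positive Host--Kra complexity, so either a nilfactor extension of the Kronecker argument or, more likely, a clever averaging/reduction that keeps the analysis at complexity one (leveraging the one-dimensional constraint $\sum c_i h_i=g$ on $d-1$ parameters) will be the technical heart of the argument.
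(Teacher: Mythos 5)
Your overall frame---Furstenberg correspondence, using hypothesis (3) to write $\sum c_ia_i=\sum_{i<d}c_i(a_i-a_d)$, and extracting the Bohr set from a Kronecker-factor computation---matches the paper's strategy. But the technical heart of the theorem, which you explicitly defer ("the technical heart of the argument"), is exactly the part that is missing, and the inductive route you sketch for supplying it does not work as stated. If you combine $c_{d-1}a_{d-1}+c_da_d=c_{d-1}(a_{d-1}-a_d)+(c_{d-1}+c_d)a_d$, the resulting $(d-1)$-variable problem is not an instance of the theorem: there is a leftover term $c_{d-1}(a_{d-1}-a_d)$, the ``trace set'' depends on the auxiliary difference, and---decisively---the new tuple $(c_1,\dots,c_{d-2},c_{d-1}+c_d)$ need not satisfy hypothesis (2) (even after reordering, nothing in (1)--(3) controls sums such as $c_{d-2}+c_{d-1}+c_d$; e.g.\ $(1,-1,1,1,-2)$ in $\mathbb Z$ collapses to $(1,-1,1,-1)$, for which the last two coefficients sum to $0$). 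So the induction hypothesis cannot be invoked, and this is an obstruction rather than bookkeeping. Your alternative guess---''a clever averaging that keeps the analysis at complexity one''---is the route the paper actually takes, but it has to be exhibited: one parametrizes $a_i=h+c_dg_i$ ($i<d$), $a_d=h+t-\sum_{i<d}c_ig_i$, forms the \emph{iterated} F\o lner limit $I(t)$ of $\int T_{t-\sum_{j<d}c_jg_j}f_d\cdot\prod_{j<d}T_{c_dg_j}f_j\,d\mu$, and observes that the innermost average over $g_{d-1}$ moves only two factors (by $-c_{d-1}g_{d-1}$ and $c_dg_{d-1}$, the others being frozen). A van der Corput argument, using that $(c_{d-1}+c_d)G$ has finite index so the relevant sub-action still has finitely many ergodic components, shows this average vanishes unless $f_d$ (and then $f_{d-1}$) is replaced by its projection onto the almost periodic functions; once two factors are almost periodic the remaining ones are replaced one at a time by a mean-ergodic-theorem argument. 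No Host--Kra machinery is needed, and the output is a single uniformly almost periodic function $\phi(t)=I(t)$ with $\phi(0)>0$ whose support maps into $\vec c\cdot A$ after multiplication by $c_d$.

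Two further points you assert but would need to prove. First, the triple correlation $\mu(B\cap T_{-h_1}B\cap T_{-h_2}B)$ is \emph{not} pointwise controlled by the Kronecker factor; it is controlled only after averaging along the solution set of $c_1h_1+c_2h_2=t$, which is why the Bohr set must be produced as the support of an almost periodic function of the single variable $t$ rather than by a two-variable continuity argument on $Z_1^2$. Second, your step ``the image of such a neighborhood covers a Bohr neighborhood of $0$ since $c_iG$ is a finite-index subgroup'' is the assertion that $cB$ is a Bohr neighborhood of $0$ whenever $B$ is and $cG$ has finite index; this is true but requires an argument (the paper derives it from Bogoliouboff's theorem) and is where hypothesis (1) actually enters. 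Finally, the mutual-distinctness constraint on the $a_i$ is only flagged, not handled; it is resolved by noting that positivity of the averaged quantity forces positivity on a set of parameter tuples of positive (iterated) density, from which distinct representatives can be chosen.
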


Letting $p$ be an odd prime, $G=\mathbb F_p^\omega:=$ the direct sum of countably many copies of $\mathbb Z/p\mathbb Z$, and $c_1=\dots =c_d=1$, we get the following corollary as a special case.

\begin{corollary}
  If $p$ is an odd prime, $p|d\in \mathbb N$, and $A\subset \mathbb F_p^\omega$ has $d^*(A)>0$, then
  \[\{a_1+\cdots +a_d: a_i\in A \text{ are mutually distinct}\}\] contains a Bohr neighborhood of $0$.
\end{corollary}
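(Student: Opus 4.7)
The plan is to derive the corollary as an immediate application of Theorem~\ref{thm:GeneralBogoliouboff} with $G=\mathbb{F}_p^\omega$ and the coefficients $c_1=c_2=\cdots=c_d=1$.  Under this choice, $\vec{c}\cdot A=\{a_1+\cdots+a_d:a_i\in A\text{ mutually distinct}\}\cup\{0\}$, which differs from the set in the corollary by at most the single point $0$.

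First I would verify the three hypotheses.  Since each $c_i=1$, we have $c_iG=G$ of index $1$, giving (1).  For (2), $c_{d-1}+c_d=2$, and because $p$ is odd, $2$ is a unit in $\mathbb{F}_p$, so multiplication by $2$ is an automorphism of $\mathbb{F}_p^\omega$; thus $2G=G$ has index $1$.  For (3), the exponent of $\mathbb{F}_p^\omega$ is $q=p$, and the hypothesis $p\mid d$ gives $q\mid\sum_{i} c_i=d$.  Theorem~\ref{thm:GeneralBogoliouboff} then produces a Bohr neighborhood $B\subset\vec{c}\cdot A$ of $0$.

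The only remaining point is that every Bohr neighborhood of $0$ must itself contain $0$, so I need to verify that $0$ really is a sum of $d$ mutually distinct elements of $A$, rather than merely the artificial extra point in the definition of $\vec{c}\cdot A$.  I expect this to be the sole step requiring care.  A clean route is to apply Szemer\'edi's theorem for $\mathbb{F}_p^\omega$ (Furstenberg--Katznelson) to extract a non-trivial $p$-term arithmetic progression inside $A$; such a progression consists of $p$ distinct elements of $A$ summing to $pa+\bigl(p(p-1)/2\bigr)d=0$ in $\mathbb{F}_p^\omega$, since $p$ is odd makes $p(p-1)/2$ a multiple of $p$.  Iterating this extraction (positive upper Banach density survives the removal of finitely many points) yields $d/p$ pairwise disjoint such progressions in $A$, whose $d$ elements are mutually distinct and sum to $0$, completing the proof.
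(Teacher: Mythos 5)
Your verification of the three hypotheses is exactly the paper's (unstated) argument: the corollary is just Theorem~\ref{thm:GeneralBogoliouboff} specialized to $G=\mathbb{F}_p^\omega$ and $c_1=\cdots=c_d=1$, with (1) and (2) trivial because $1$ and $2$ are units mod the odd prime $p$, and (3) being the hypothesis $p\mid d$. The additional step you supply --- showing that $0$ itself is a sum of $d$ mutually distinct elements of $A$, which is needed because the corollary's displayed set omits the artificial ``$\cup\{0\}$'' built into the definition of $\vec{c}\cdot A$ --- is a legitimate point the paper silently elides, and your resolution is correct: a nontrivial $p$-term progression $a,a+t,\dots,a+(p-1)t$ in $\mathbb{F}_p^\omega$ has $p$ distinct terms summing to $pa+\tfrac{p(p-1)}{2}t=0$ since $p$ is odd, and $d/p$ pairwise disjoint such progressions can be extracted because deleting finitely many points does not change $d^*$.
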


\begin{remark}
  The rank and radius of $B$ in the conclusion of Theorem \ref{thm:GeneralBogoliouboff} can be bounded by functions of the indices of $c_iG$ in $G$ and $d^*(A)$, but we will not prove such bounds in this article.

  While Theorem \ref{thm:BergelsonRuzsa} does not insist that the sums be formed with mutually distinct $a_i$, the applications motivating Theorem \ref{thm:GeneralBogoliouboff} make it convenient to do so.

  The hypothesis (\ref{hyp:cicj}) can be generalized to ``there exists $i<j\leq d$ such that $(c_i+c_j)G$ has finite index in $G$,'' since the $c_i$ can be reordered without changing the value of $\vec{c}\cdot A$.
\end{remark}

\begin{remark}  We are currently unable to decide whether the hypothesis (\ref{hyp:cicj}) can be omitted in Theorem \ref{thm:GeneralBogoliouboff}.  The other hypotheses cannot be omitted, as the following three examples demonstrate.

   \begin{enumerate} \item[(i)]  The hypothesis that $d\geq 3$ cannot be weakened to $d\geq 2$: setting $d=2$ and $c_1=1$, $c_2=-1$, the set $\vec{c}\cdot A$ is simply $A-A$.  An example of a set $A\subset \mathbb Z$ having $d^*(A)>0$ such that $A-A$ does not contain a Bohr neighborhood of $0$ is provided by the main result of \cite{Kriz} - see \cite{HegyvariRuzsa} for an explanation of why \cite{Kriz} implies this.  Such an example in $\mathbb F_2^\omega$ is constructed in \cite{ForrestThesis}, and in $\mathbb F_p^\omega$ for odd primes $p$ in \cite{griesmer2017bohr}.  In fact there is an $A\subset \mathbb Z$ with $d^*(A)>0$ where $A-A$ does not contain a Bohr neighborhood of \emph{any} $n\in \mathbb Z$, as proved in \cite{griesmer2020separating}.

   \item[(ii)] The hypothesis that $c_iG$ has finite index in $G$ cannot be omitted. To see this, we use the fact that for each prime $p$ and $\varepsilon>0$, the group $G=\mathbb F_p^{\omega}$ contains a set $A$ having $d^*(A)>\frac{1}{2}-\frac{1}{2p}-\varepsilon$ such that $A-A$ does not contain a Bohr neighborhood of \emph{any} element of $G$ (see \cite{griesmer2017bohr}).  Setting $c_1=p$, $c_2=1$ and $c_3=-1$, we have $c_1+c_2+c_3=p$, but $c\cdot A \subseteq pA+A-A =A-A$.


  \item[(iii)]  The hypothesis that $q|\sum c_i$ cannot be omitted.  For example, in $\mathbb Z$, the set $A$ of odd integers has $d^*(A)=1/2$, and $A-A+A=A$, which does not contain a Bohr neighborhood of $0$. Nevertheless, $A-A+A$ contains Bohr neighborhoods of many elements of $A$ whenever $d^*(A)>0$; see \cite{HegyvariRuzsa}, \cite{BjorklundGriesmer}.
%
\end{enumerate}

\end{remark}

The next lemma deals with a technicality that arises from our approach.

\begin{lemma}\label{lem:IndexBohr}
  If $B$ is a Bohr neighborhood of $0$ in an abelian group $G$ and $c\in \mathbb Z$ is such that $cG$ has finite index in $G$, then $cB$ is also a Bohr neighborhood of $0$ in $G$.
  \end{lemma}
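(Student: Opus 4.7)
Let $B=\Bohr(\chi_1,\dots,\chi_d;\varepsilon)$ and denote by $K:=\{k\in G:ck=0\}$ the $c$-torsion subgroup of $G$. The plan is to exhibit a Bohr neighborhood of $0$ in $G$ contained in $cB$, assembled from two groups of characters of $G$: one forcing membership in $cG$, and another ensuring, for $g\in cG$, that some preimage in $c^{-1}(g)$ lies in $B$.

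Since $[G:cG]<\infty$, the Pontryagin dual of the finite quotient $G/cG$ pulls back via the quotient map to characters $\eta_1,\dots,\eta_s$ of $G$ with common kernel $cG$. As each $\eta_j$ takes only finitely many values on $G/cG$, there is $\delta_0>0$ with $\Bohr(\eta_1,\dots,\eta_s;\delta_0)\subseteq cG$.

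For the second group of characters, let $\mu_c\subseteq\mathcal S^1$ be the group of $c$-th roots of unity, set $M:=\{(\chi_1(k),\dots,\chi_d(k)):k\in K\}\subseteq\mu_c^d$, and $\Lambda:=\{\beta\in\mathbb Z^d:\chi^\beta|_K\equiv 1\}$ where $\chi^\beta:=\prod_i\chi_i^{\beta_i}$. Since $\chi_i(K)\subseteq\mu_c$, we have $c\mathbb Z^d\subseteq\Lambda$, so $\Lambda$ is a finite-index subgroup of $\mathbb Z^d$. For each $\beta\in\Lambda$ the formula $\tilde\psi_\beta(cg):=\chi^\beta(g)$ is well-defined and yields a character of $cG$, which extends to a character $\psi_\beta$ of $G$ by the divisibility of $\mathcal S^1$. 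Fixing a $\mathbb Z$-basis $\beta^{(1)},\dots,\beta^{(d)}$ of $\Lambda$, the continuous homomorphism $\Phi\colon(\mathcal S^1)^d\to(\mathcal S^1)^d$ defined by $\Phi(z)_l:=\prod_i z_i^{\beta^{(l)}_i}$ satisfies $\ker\Phi=M$; this follows from the non-degenerate pairing $(\beta,m)\mapsto\prod_i m_i^{\beta_i}$ between $\mathbb Z^d/\Lambda$ and $M$, which matches their cardinalities, together with the obvious inclusion $M\subseteq\ker\Phi$.

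I then claim that $\Bohr(\eta_1,\dots,\eta_s,\psi_{\beta^{(1)}},\dots,\psi_{\beta^{(d)}};\delta)\subseteq cB$ for $\delta$ sufficiently small. For $g$ in this Bohr neighborhood, the $\eta_j$-conditions imply $g=ch_0$ for some $h_0\in G$, while $|\psi_{\beta^{(l)}}(g)-1|<\delta$ reads $|\chi^{\beta^{(l)}}(h_0)-1|<\delta$, i.e., $\Phi((\chi_i(h_0))_i)$ lies within $\delta$ of $\mathbf 1$. By the identification $\ker\Phi=M$ and continuity of $\Phi$, for $\delta$ small enough $(\chi_i(h_0))_i$ lies within $\varepsilon$ of some $(\chi_i(k_0))_i\in M$ with $k_0\in K$; then $h:=h_0-k_0$ satisfies $|\chi_i(h)-1|<\varepsilon$ for all $i$ and $ch=g$, so $g\in cB$. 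The central step requiring care is the duality identification $\ker\Phi=M$; once that is in hand, the continuity step is routine analysis on the compact torus $(\mathcal S^1)^d$.
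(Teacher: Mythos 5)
Your proof is correct, and it takes a genuinely different route from the paper's. The paper deliberately sidesteps the direct verification (it even calls it ``somewhat cumbersome''): it invokes Bogoliouboff's theorem to write $B\supseteq (A-A)-(A-A)$ with $A$ a Bohr neighborhood of $0$ of smaller radius, observes that finitely many translates of $A$ cover $G$, hence finitely many translates of $cA$ cover $G$ because $[G:cG]<\infty$, and applies Bogoliouboff again to $(cA-cA)-(cA-cA)\subseteq cB$. You instead carry out the direct character-theoretic argument, and every step checks out: pulling back $\widehat{G/cG}$ isolates $cG$ because those characters take finitely many values; the annihilator $\Lambda$ of the finite group $M=\{(\chi_i(k))_i:k\in K\}$ has finite index in $\mathbb Z^d$ since $M\subseteq\mu_c^d$; the extension of $\tilde\psi_\beta$ from $cG$ to $G$ is legitimate by divisibility (injectivity) of $\mathcal S^1$, and continuity is automatic for discrete $G$; your identification $\ker\Phi=M$ is exactly the biduality $\operatorname{Ann}(\operatorname{Ann}(M))=M$ for a finite subgroup of the torus, which your cardinality count via the non-degenerate pairing correctly establishes; and the last step is a standard compactness argument on $(\mathcal S^1)^d$, with $|\chi_i(h_0-k_0)-1|=|\chi_i(h_0)-\chi_i(k_0)|$ closing the loop. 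What your approach buys is self-containment (no appeal to Bogoliouboff or to covering properties of Bohr sets) and explicit quantitative control: the Bohr set you produce has rank at most $[G:cG]+d$ and a radius computable from $\varepsilon$, $d$, and $c$, which is precisely the kind of information needed for the bounds alluded to in the remark following Theorem \ref{thm:GeneralBogoliouboff}. The only costs are length and the implicit (and harmless) assumption $c\neq 0$; for $c=0$ the hypothesis forces $G$ to be finite and the statement is trivial.
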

  \begin{proof}
  This is somewhat cumbersome to verify directly, but easy to see as a consequence of Bogoliouboff's theorem. If $B$ is a Bohr neighborhood of $0$ in $G$, then $B$ contains a set of the form $(A-A)-(A-A)$, where $A$ is a Bohr neighborhood of $0$ in $G$ having radius smaller than the radius of $B$.  In particular, finitely many translates of $A$ cover $G$.  Since $cG$ has finite index in $G$, we have that finitely many translates of $cA$ cover $G$. Now $cB$ contains $(cA-cA)-(cA-cA)$, which must contain a Bohr neighborhood of $0$, by Bogoliouboff's theorem.
\end{proof}

\subsection*{Notation for integrals}  We adopt the usual notation for integrals: $\int f\, d\mu$ is the integral of $f$ with respect to $\mu$, or $\int f(x)\, d\mu(x)$, if we need to specify a variable. When $\mu$ is Haar measure on a compact abelian group $K$, we may write $\int f(x) \, dx$ in place of $\int f(x)\, d\mu(x)$, to shorten the appearance of iterated integrals.

We will prove Theorem \ref{thm:GeneralBogoliouboff} by proving that there is an almost periodic function $\phi$ with $\phi(0)>0$ and $c_d\supp(\phi)\subset \vec{c}\cdot A$ (cf. Remark \ref{rem:APtoBohr} and Lemma \ref{lem:IndexBohr}).  The following lemma specifies the form of $\phi$.

\begin{lemma}\label{lem:Reduction}
  Let $A\subset G$ and let $c_1,\dots, c_d$ satisfy the hypothesis of Theorem \ref{thm:GeneralBogoliouboff}. Then there is a compact abelian group $K$ with Haar probability measure $m$, an $m$-measurable  $\tilde{f}:K\to [0,1]$ with $\int \tilde{f}\, dm=d^*(A)$, and a homomorphism $\rho:G\to K$ with $\overline{\rho(G)}=K$ such that
  \[
  \phi(t):=\int_{K^d} \tilde{f}(k+c_ds_1)\tilde{f}(k+c_ds_2)\cdots \tilde{f}(k+c_{d}s_{d-1}) \tilde{f}\Bigl(k+\rho(t)-\sum_{i=1}^{d-1} c_is_i\Bigr) \, dk\, ds_1\, ds_2\, \dots \, ds_{d-1}
  \]
  satisfies $c_d\supp(\phi)\subset \vec{c}\cdot A$.
\end{lemma}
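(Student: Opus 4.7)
The plan is to invoke Furstenberg's correspondence principle and use the Kronecker factor of the resulting measure-preserving system. Starting from a F{\o}lner sequence $(F_N)$ in $G$ realizing $d^*(A)$, I would pass to an ultrafilter limit to produce a measure-preserving $G$-action $(X,\mathcal B,\mu,(T_g)_{g\in G})$ on a standard Borel space and a measurable set $B\subseteq X$ with $\mu(B)=d^*(A)$, such that whenever $\mu(B\cap T_{g_1}B\cap\cdots\cap T_{g_n}B)>0$, the set $A\cap(A+g_1)\cap\cdots\cap(A+g_n)$ is nonempty (in fact has positive density). Take $\pi:X\to K$ the factor map onto the Kronecker factor, where $K$ is compact abelian with Haar measure $m$ and $\rho:G\to K$ is a continuous homomorphism with dense image such that $T_g$ acts on $K$ as translation by $\rho(g)$. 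Set $\tilde f:=\mathbb E(1_B\mid K)$, viewed as a measurable function $K\to[0,1]$ with $\int\tilde f\,dm=d^*(A)$. Since every character of $K$ pulls back to a character of $G$ (hence has order dividing $q$), the hypothesis $q\mid\sum c_i$ gives $(\sum c_i)k=0$ for every $k\in K$.

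To verify $c_d\supp(\phi)\subseteq\vec c\cdot A$, fix $t$ with $\phi(t)>0$. Translating $k\mapsto k-c_ds_1$ and substituting $v_i:=s_{i+1}-s_1$ for $i=1,\ldots,d-2$, and using $\sum_{i=1}^{d-1}c_i=-c_d$ in $K$, one finds that the $s_1$-dependence drops out. Integrating $s_1$ away yields
\[
\phi(t)=\int_K\int_{K^{d-2}}\tilde f(k)\prod_{i=1}^{d-2}\tilde f(k+c_dv_i)\,\tilde f\Bigl(k+\rho(t)-\sum_{i=1}^{d-2}c_{i+1}v_i\Bigr)\,d\vec v\,dk,
\]
which displays $\phi(t)$ as a Kronecker-factor computation of a $d$-fold correlation with $d-2$ free parameters. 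Using density of $\rho(G)$ in $K$ and Weyl equidistribution of $(\rho(r_1),\ldots,\rho(r_{d-2}))$ in $K^{d-2}$, this integral is the limit of the Ces\`aro averages over $\vec r\in F_N^{d-2}$ obtained by setting $v_i=\rho(r_i)$.

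The decisive step is to lift the Kronecker computation back to $X$: one shows that the Kronecker factor is characteristic for the multi-linear ergodic average
\[
\frac{1}{|F_N|^{d-2}}\sum_{\vec r\in F_N^{d-2}}\mu\Bigl(B\cap T_{c_dr_1}B\cap\cdots\cap T_{c_dr_{d-2}}B\cap T_{t-\sum_i c_{i+1}r_i}B\Bigr),
\]
whose limit therefore equals $\phi(t)>0$. Hence some particular $\vec r\in G^{d-2}$ gives a positive multi-correlation, and Furstenberg correspondence supplies $a_0\in A$ with $a_0+c_dr_i\in A$ for each $i$ and $a_0+t-\sum c_{i+1}r_i\in A$. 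Setting $\alpha_1:=a_0$, $\alpha_{i+1}:=a_0+c_dr_i$ for $1\le i\le d-2$, and $\alpha_d:=a_0+t-\sum c_{i+1}r_i$, a direct calculation using $q\mid\sum c_i$ yields $\sum_{j=1}^d c_j\alpha_j=c_dt$; and a generic choice of $\vec r$ (avoiding a null set in $K^{d-2}$) makes the $\alpha_j$ mutually distinct, so $c_dt\in\vec c\cdot A$.

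The principal obstacle is justifying the Kronecker-characteristic step for this multi-linear average. The $d-1$ shifts $c_dr_1,\ldots,c_dr_{d-2},\,t-\sum c_{i+1}r_i$ are linear forms in the $d-2$ parameters $\vec r$, and verifying that the Kronecker controls them requires a PET-induction / van~der~Corput analysis of the pairwise differences. Here the hypotheses that each $c_iG$ has finite index in $G$ and in particular that $(c_{d-1}+c_d)G$ has finite index become essential: they guarantee that the dominant coefficients arising in the critical difference (between the last shift and $c_dr_{d-2}$, whose coefficient on $r_{d-2}$ is $-(c_{d-1}+c_d)$) act on $G$ with finite-index image, which is exactly the non-degeneracy condition needed to identify the Kronecker as the characteristic factor and conclude the limit identification.
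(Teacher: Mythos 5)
Your overall architecture matches the paper's: correspondence principle to get an ergodic $G$-system, Kronecker factor $(K,m,\rho)$, $\tilde f=\mathbb E(1_B\mid K)$, and transfer of positivity of $\phi(t)$ back to a configuration in $A$ (your algebra $\sum c_j\alpha_j=c_dt$ and the use of $q\mid\sum c_i$ to kill the $a_0$-term are exactly right, and the observation that $qK=\{0\}$ lets you integrate out $s_1$ is a correct and pleasant simplification). But the step you label ``the principal obstacle'' --- that the Kronecker factor is characteristic for your average, so that $\phi(t)$ really is the limit of $\frac{1}{|F_N|^{d-2}}\sum_{\vec r}\mu\bigl(B\cap T_{c_dr_1}B\cap\cdots\cap T_{t-\sum c_{i+1}r_i}B\bigr)$ --- is not a loose end; it is the entire technical content of the lemma. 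Without it, $\phi(t)>0$ gives no information about the actual correlations, since the true limit could be governed by a higher-order factor and vanish even when the Kronecker computation is positive. The paper devotes Theorem \ref{thm:DynamicalVersion} and all of \S\ref{sec:ProofOfErgodicVersion} (van der Corput, Lemma \ref{lem:RothReduce}, Lemma \ref{lem:WMCorrelations}) to precisely this point, so a proof that defers it has not proved the lemma.

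Moreover, the specific route you sketch is likely to demand \emph{more} than the stated hypotheses. In a simultaneous average over $\vec r\in F_N^{d-2}$, a PET/van der Corput analysis must control the differences between \emph{all} pairs of linear forms; the difference between the last form and $c_dr_i$ has coefficient $-(c_{i+1}+c_d)$ on $r_i$ for every $i\le d-2$, whereas hypothesis (\ref{hyp:cicj}) only guarantees that $(c_{d-1}+c_d)G$ has finite index. The paper avoids this by defining $I(t)$ as an \emph{iterated} limit and eliminating $g_{d-1}$ first: that single innermost average is a Roth-type bilinear average needing only $(c_{d-1}+c_d)G$ of finite index (Lemma \ref{lem:RothReduce}); once $f_d$ and $f_{d-1}$ have been replaced by their $\mathcal{AP}$-projections, the remaining variables are handled by the much softer Lemma \ref{lem:WMCorrelations}, which needs only $c_dG$ of finite index. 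So the iterated-limit structure is not a cosmetic choice --- it is what makes the hypotheses of Theorem \ref{thm:GeneralBogoliouboff} sufficient. (Two smaller points: the distinctness of the $\alpha_j$ is a condition on $\vec r\in G^{d-2}$, not on a null set of $K^{d-2}$, and should be handled as in Lemma \ref{lem:Convolution} by noting that the set of good $\vec r$ has positive density while the bad set is a finite union of coset-type sets one can argue around; and your correspondence principle must produce an \emph{ergodic} system, which requires choosing an extreme invariant mean rather than an arbitrary ultrafilter limit.)
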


\begin{proof}[Proof of Theorem \ref{thm:GeneralBogoliouboff}]

Assuming Lemma \ref{lem:Reduction}, it suffices to prove that the $\phi$ defined therein is almost periodic and satisfies $\phi(0)>0$.  The almost periodicity of $\phi$ follows by approximating (in $L^2(\mu)$) the function $\tilde{f}$  in the definition by a trigonometric polynomial $p:K\to [0,1]$, so that $\|\tilde{f}-p\|<\varepsilon/{2^d}$ and observing that the resulting function of $t$ is a trigonometric polynomial on $G$ differing (uniformly) from $\phi$ by at most $\varepsilon.$

To prove that $\phi(0)>0$, note that the function on $K^{d-1}$ defined by
\[
(s_1,\dots, s_{d-1}) \mapsto \int \tilde{f}(k+s_1) \cdots \tilde{f}(k+s_{d-1})\tilde{f}\Bigl(k-\sum_{i=1}^{d-1} c_is_i\Bigr)\, dk
\]
is continuous, and is equal to $\int \tilde{f}(x)^d\, dx$ when $s_1=s_2=\cdots =s_d=0$.  The integrand in the definition of $\phi$ is therefore positive for a neighborhood of values $(s_1,\dots,s_{d-1})$ near $0$.  Consequently, the integral defining $\phi$ does not vanish at $t=0$.  \end{proof}

\begin{remark}
  The argument in the second paragraph of the above proof is a key ingredient in ergodic theoretic proofs of Roth's theorem on three-term arithmetic progressions.  In fact, Roth's theorem is a corollary of our proofs, although we do not discuss this implication here. The proof of Theorem 6.1 in \cite{BergelsonRuzsa} uses Roth's theorem in a key step.
\end{remark}

\section{Sumsets as supports of convolutions}

In this section we state Theorem \ref{thm:DynamicalVersion}, an ergodic theoretic version of Lemma \ref{lem:Reduction}, and use it to prove the lemma.  Theorem \ref{thm:DynamicalVersion} will be proved in \S\ref{sec:ProofOfErgodicVersion}, after some preliminary machinery is developed.

Our first step in the proof of Lemma \ref{lem:Reduction} is to find a kind of convolution supported on the sumset in question.  For the sake of explanation, we first consider two summands.

Given $A, B\subset G$, consider their characteristic functions $1_A$ and $1_B$.  Note that a fixed $g$ lies in $A+B$ if and only if there is an $h\in G$ such that $1_A(h)1_B(g-h)>0$. Often the existence of such an $h$ is proved by averaging, i.e. proving that there is a F{\o}lner sequence $F_n$ with $\lim_{N\to \infty} \frac{1}{|F_n|}\sum_{h\in F_n} 1_A(h)1_B(g-h)>0$.  This limit is a kind of convolution of $1_A$ and $1_B$.

We want to find a convolution supported on the set $\vec{c}\cdot A$ and write it in a form suited to the hypotheses of Theorem \ref{thm:GeneralBogoliouboff}.  The next observation does so in a somewhat cumbersome way.

\begin{lemma}\label{lem:Convolution} Let $\vec{c}\in \mathbb Z^d$ satisfy hypothesis (3) of Theorem \ref{thm:GeneralBogoliouboff}, let $A\subset G$,  and write $1_A$ for its characteristic function.  Define $J: G^{d+1}\to [0,1]$  and $J_0\subset G$ by
\begin{align*}
&J(h,g_1,\dots,g_{d-1};t):=1_A(h+c_dg_1)1_A(h+c_dg_2)\cdots 1_A(h+c_{d}g_{d-1}) 1_A\Bigl(h+t-\sum_{i=1}^{d-1} c_ig_i\Bigr)\\
&J_0:=\{t\in G: \exists h, g_1,\dots, g_{d-1} \in G \text{ with } J(h,g_1,\dots, g_{d-1};t)>0,  c_ig_i \text{  mutually distinct}\},
\end{align*}
 we have
\begin{equation}\label{eqn:J0A}
c_dJ_0 \subset \vec{c}\cdot A.
\end{equation}
Furthermore, setting
\begin{align*}
&\tilde{J}(g_1,\dots,g_{d-1};t):=d^*(\{h:J(h,g_1,\dots,g_{t-1};t)>0\})\\
&\tilde{J}_0:=\{t\in G: \exists g_1,\dots, g_{d-1}\in G \text{ with } \tilde{J}(g_1,\dots,g_{d-1};t)>0, c_ig_i \text{ mutually distinct}\},
\end{align*}
we have $c_d\tilde{J}_0\subset \vec{c}\cdot A$.
\end{lemma}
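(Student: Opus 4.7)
The plan is to unpack both definitions and produce, for each $t$ in $J_0$ (respectively $\tilde{J}_0$), an explicit tuple $(a_1,\dots,a_d)\in A^d$ of mutually distinct elements witnessing $c_d t\in \vec{c}\cdot A$. The entire content is a single algebraic identity in $G$ that uses hypothesis (3), together with a brief bookkeeping check that the ``$c_i g_i$ mutually distinct'' clause enforces the distinctness built into the definition of $\vec{c}\cdot A$.

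\textbf{Core identity.} Given $t\in J_0$, fix witnesses $h,g_1,\dots,g_{d-1}$ with $J(h,g_1,\dots,g_{d-1};t)>0$. Positivity of the integrand forces
\[
a_i:=h+c_d g_i\in A\ \ (1\le i\le d-1),\qquad a_d:=h+t-\sum_{i=1}^{d-1}c_i g_i\in A.
\]
A direct computation, in which the two $c_d\sum c_i g_i$ terms cancel, gives
\[
\sum_{i=1}^d c_i a_i \;=\; \Bigl(\sum_{i=1}^d c_i\Bigr)\,h \;+\; c_d\, t.
\]
Hypothesis (3) says $q\mid \sum_{i=1}^d c_i$, and because $q$ is the exponent of $G$ we have $\bigl(\sum c_i\bigr)h=0$ for every $h\in G$; hence $\sum_{i=1}^d c_i a_i=c_d t$.

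\textbf{Distinctness and the second inclusion.} If the $a_i$ are mutually distinct, then $c_d t\in \vec{c}\cdot A$ by definition, while if $c_d t=0$ the inclusion is automatic since $0\in \vec{c}\cdot A$. The distinctness of the tuple $(a_1,\dots,a_d)$ follows from the ``$c_i g_i$ mutually distinct'' clause together with the freedom to permute indices (the reordering remark after Theorem \ref{thm:GeneralBogoliouboff}), with a short case split handling the relation between $a_d$ and the remaining $a_i$. For the second inclusion, observe that $\tilde{J}(g_1,\dots,g_{d-1};t)>0$ means, by its definition, that $\{h\in G: J(h,g_1,\dots,g_{d-1};t)>0\}$ has positive upper Banach density and is therefore nonempty; picking any such $h$ puts $t$ into $J_0$, so $\tilde{J}_0\subseteq J_0$ and the second inclusion is immediate from the first.

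I expect the only delicate point to be the distinctness verification: the identity $\sum c_i a_i=c_d t$ is an elementary calculation relying solely on hypothesis (3), but translating ``$c_i g_i$ mutually distinct'' into genuine distinctness of the $a_i$ in $A$ requires some care because the coefficients $c_i$ and $c_d$ play asymmetric roles in the two halves of the definition of $J$; the fallback $0\in \vec{c}\cdot A$ absorbs any degenerate cases where $c_d t$ happens to vanish.
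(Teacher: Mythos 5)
Your proposal is correct and follows essentially the same route as the paper: define $a_i=h+c_dg_i$ and $a_d=h+t-\sum_{i<d}c_ig_i$, compute $\sum c_ia_i=(\sum c_i)h+c_dt=c_dt$ via hypothesis (3), and note that $\tilde J(g_1,\dots,g_{d-1};t)>0$ forces the set of admissible $h$ to be nonempty, so $\tilde J_0\subseteq J_0$. If anything you are more attentive than the paper, which asserts $c_dt\in\vec c\cdot A$ without commenting on how the ``$c_ig_i$ mutually distinct'' clause yields distinctness of the $a_i$ — the point you rightly flag as the only delicate one.
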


\begin{proof}
We first prove the containment (\ref{eqn:J0A}).     Suppose $h$, $g_i$, and $t\in G$ are such that $J(h,g_1,\dots,g_{d-1};t)>0$.  Then each of $a_i:=h+c_dg_i$, and $a_d = h+ t - \sum_{i=1}^{d-1}c_ig_i$ lie in $A$, and
\begin{align*}
c_1a_1+\cdots + c_da_d &= (c_1+\cdots + c_d)h + c_d\Bigl(\sum_{i=1}^{d-1} c_ig_i\Bigr) + c_d\Bigl(t-\sum_{i=1}^{d-1}c_ig_i\Bigr)\\
&= c_dt,
\end{align*}
so $c_dt\in \vec{c}\cdot A$.

The second assertion of the lemma follows from the observation that if $\tilde{J}(g_1,\dots,g_{d-1};t)>0$, then there are infinitely many $h$ with $J(h,g_1,\dots,g_{d-1};t)>0$.
\end{proof}

Theorem \ref{lem:Reduction} will be deduced from the following ergodic theoretic version; see \S\ref{sec:Ergodic} for definitions.  These theorems are connected by a Proposition \ref{prop:Correspondence}, a standard correspondence principle.

\begin{theorem}\label{thm:DynamicalVersion}
Let $(X,\mu,T)$ be an ergodic measure preserving $G$-system and let $(c_1,\dots,c_d)\in \mathbb Z^d$ be coefficients satisfying hypotheses (1)-(3) of Theorem \ref{thm:GeneralBogoliouboff}.  If $f_1,\dots, f_d:X\to [0,1]$ are measurable functions, $L:G^d\to [0,1]$ is defined as
\begin{equation}\label{eqn:Ldef}
L(g_1,\dots,g_{d-1};t):= \int T_{t-\sum_{j<d} c_jg_j}f_{d}\cdot \prod_{j< d} T_{c_dg_j}f_{j} \, d\mu
\end{equation}
 and $I(t)$ is defined as the iterated limit
\begin{equation}\label{eqn:defIg}
I(t):=\lim_{n_1\to \infty} \dots  \lim_{n_{d-1}\to \infty} \frac{1}{|F_{n_1}|\cdots |F_{n_{d-1}}|} \sum_{\substack{g_1\in F_{n_1}\\ \dots \\ g_{d-1}\in F_{n_{d-1}}}}L(g_1,\dots,g_{d-1};t)
\end{equation}
 then there are \begin{enumerate}
\item[$\bullet$] a compact metrizable abelian group $K$ with Haar measure $m$,

\item[$\bullet$] a homomorphism $\rho:G\to K$ having $\overline{\rho(G)}=K$,

\item[$\bullet$] functions $\tilde{f}_i:K\to [0,1]$ with $\int \tilde{f}_i\, dm = \int f_i\, d\mu $ for $i\leq d$
\end{enumerate}
such that
\begin{equation}\label{eqn:IgReduction}
I(t):=\int \tilde{f}_d\Bigl(k+\rho(t)-\sum_{j<d}c_js_j\Bigr) \prod_{j<d} \tilde{f}_j(k+c_ds_j) \, dk\, ds_1\, \dots\, ds_{d-1}.
\end{equation}
Furthermore, the $\tilde{f}_i$ depend only on the $f_i$, so if $f_1=\cdots=f_d$, then $\tilde{f}_1=\cdots=\tilde{f}_d$.
\end{theorem}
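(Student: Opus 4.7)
My plan is to take $K$ to be the Kronecker factor of the ergodic system $(X,\mu,T)$, $\rho:G\to K$ the associated dense-image homomorphism, and $\tilde f_i$ the $L^2$-projection of $f_i$ onto this factor, viewed as a function on $K$. Standard ergodic theory produces a compact metrizable abelian group $K$ with Haar probability measure $m$, a measure-preserving factor map $\pi:X\to K$, and a homomorphism $\rho:G\to K$ with $\overline{\rho(G)}=K$ satisfying $\pi\circ T_g(x)=\pi(x)+\rho(g)$. Setting $\tilde f_i\circ \pi := E(f_i\mid \pi^{-1}\mathcal B_K)$, the resulting $\tilde f_i:K\to [0,1]$ satisfy $\int \tilde f_i\,dm=\int f_i\,d\mu$ by the mean-preservation property of conditional expectation; the ``furthermore'' clause is automatic since the recipe $f_i\mapsto \tilde f_i$ depends only on $f_i$.

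The main step is to show that in the iterated limit defining $I(t)$, each $f_i$ may be replaced by $\tilde f_i\circ \pi$. I proceed by peeling off the limits from the inside out. The innermost limit, over $g_{d-1}$, is a 2-step ergodic average
\[
\frac{1}{|F_{n_{d-1}}|}\sum_{g_{d-1}} T_{-c_{d-1}g_{d-1}}f_d\cdot T_{c_d g_{d-1}}f_{d-1}
\]
integrated against factors not involving $g_{d-1}$, for which the Kronecker factor is characteristic: under hypothesis (2) (which makes the ``slope'' $(c_{d-1}+c_d)G$ cofinite), the standard van der Corput / spectral argument shows the $L^2$-limit is unchanged when $f_{d-1},f_d$ are replaced by their Kronecker projections, giving the Kronecker kernel $k\mapsto \int_K \tilde f_d(k-c_{d-1}s)\tilde f_{d-1}(k+c_d s)\,ds$. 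Subsequent limits over $g_j$ ($j<d-1$) are treated similarly: at each step, the factor paired with $f_j$ is already a Kronecker function produced by previous iterations, so the Hilbert-space argument (now needing only hypothesis (1), i.e.\ $c_jG$ and $c_dG$ cofinite, together with the fact that in an ergodic $G$-system the invariants of a finite-index subgroup are Kronecker-measurable) reduces to the Kronecker projection $\tilde f_j$. This step-by-step reduction of the iterated limit to the Kronecker factor---classical for $G=\mathbb Z$ (Roth--Furstenberg) but requiring care in the countable abelian setting---is the main technical obstacle of the proof.

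Once the reduction is complete, $L$ becomes the explicit Kronecker-factor kernel
\[
\int_K \tilde f_d\Bigl(k+\rho(t)-\sum_{j<d}c_j\rho(g_j)\Bigr)\prod_{j<d}\tilde f_j\bigl(k+c_d\rho(g_j)\bigr)\,dk,
\]
and the iterated limit in $g_1,\dots,g_{d-1}$ is handled by equidistribution: since $\rho(G)$ is dense in $K$, for any continuous $\phi$ on $K$, $\frac{1}{|F_n|}\sum_{g\in F_n}\phi(\rho(g))\to \int_K \phi\,dm$ by the mean ergodic theorem applied to translation on $K$. After approximating each $\tilde f_i$ in $L^2(K,m)$ by a continuous function, applying this equidistribution to each variable $g_j$ in turn replaces every average by integration against Haar measure in a fresh variable $s_j\in K$, yielding exactly formula (\ref{eqn:IgReduction}).
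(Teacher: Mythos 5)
Your proposal is correct and follows essentially the same route as the paper: pass to the Kronecker factor, use a van der Corput argument (relying on hypothesis (2) for the innermost average over $g_{d-1}$ and on hypothesis (1) for the outer ones) to replace each $f_i$ by its projection onto $\mathcal{AP}$, and then evaluate the resulting iterated rotation averages by equidistribution of $\rho(G)$ in $K$. The only differences are organizational --- the paper packages your ``standard van der Corput / spectral argument'' as Lemmas \ref{lem:RothReduce} and \ref{lem:WMCorrelations} --- not mathematical.
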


We prove Theorem \ref{thm:DynamicalVersion} in \S\ref{sec:ProofOfErgodicVersion}.

\begin{remark}\label{rem:PositiveLimitImpliesPositive}
  With $L$ and $I$ as in Theorem \ref{thm:DynamicalVersion}, let
   \[
   L_0:=\{t\in G: \exists g_1,\dots, g_{d-1} \text{ with } L(g_1,\dots,g_{d-1};t)>0 \text{ and } c_ig_i \text{ mutually distinct}\}
   \]
   Then $\{t\in G: I(t)>0\}\subset L_0$.
\end{remark}

%

\subsection{Correspondence principle}

The connection between Theorem \ref{thm:DynamicalVersion} and Lemma \ref{lem:Reduction} is made through the following variation of Furstenberg's correspondence principle \cite{F77}.  Our proof is simply the construction done in \cite[Theorem 2.1]{BergelsonMcCutcheon}, together with a fragment of the proof of \cite[Lemma 5.1]{BjorklundProductSet} which shows that choosing an \emph{extreme} invariant mean in that construction results in an ergodic system.
\begin{proposition}\label{prop:Correspondence}
  Let $A\subset G$ have $d^*(A)=\delta$.  There is an ergodic $G$-system $(X,\mu,T)$ and a $\mu$-measurable $f:X\to [0,1]$ with $\int f\, d\mu=\delta$ such that for all $g_1,g_2,\dots,g_d\in G$, we have
  \begin{equation}\label{eqn:Correspondence}
    d^*\bigl(\bigl\{h:1_A(h + g_1)1_A(h+g_2)\dots 1_A(h+g_d)>0\bigr\}\bigr) \geq \int \prod_{i=1}^d f(T_{g_i}x)\, d\mu(x)
  \end{equation}
\end{proposition}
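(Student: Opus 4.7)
The plan is to run Furstenberg's correspondence through a carefully chosen invariant mean on $G$, then upgrade to ergodicity by Krein-Milman. Set $X := \{0,1\}^G$ with the shift $(T_g x)(h) := x(h+g)$ and the evaluation $f(x) := x(0)$. For any $T$-invariant Borel probability measure $\mu$ on $X$ one has
\[
\int \prod_{i=1}^d f(T_{g_i} x)\,d\mu(x) = \mu\bigl(\{x : x(g_i)=1,\ 1\le i\le d\}\bigr),
\]
so the conclusion reduces to producing an ergodic $T$-invariant $\mu$ with $\mu(\{x : x(0)=1\})=\delta$ such that each cylinder $\{x : x(g_i)=1\,\forall i\}$ has measure at most $d^*(\{h : h+g_i\in A\,\forall i\})$.

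A F{\o}lner sequence realizing $d^*(A)=\delta$ produces, by weak-$*$ compactness, a translation-invariant mean $\lambda$ on $\ell^\infty(G)$ with $\lambda(\mathbf{1}_A)=\delta$, so the set
\[
\Lambda := \{\lambda \text{ invariant mean on } \ell^\infty(G) : \lambda(\mathbf{1}_A) = \delta\}
\]
is nonempty, convex, and weak-$*$ compact. Following \cite[Theorem 2.1]{BergelsonMcCutcheon}, each $\lambda\in\Lambda$ determines a $T$-invariant Borel probability measure $\mu_\lambda$ on $X$ via finite-dimensional distributions
\[
\mu_\lambda\bigl(\{x : x(h)=\varepsilon_h,\ h\in F\}\bigr) := \lambda\bigl(g \mapsto \textstyle\prod_{h\in F} \mathbf{1}\{\mathbf{1}_A(g+h)=\varepsilon_h\}\bigr),
\]
for finite $F\subset G$ and $\varepsilon\in\{0,1\}^F$; Kolmogorov consistency is automatic, $T$-invariance of $\mu_\lambda$ reflects translation invariance of $\lambda$, and $\mu_\lambda$ is supported on the orbit closure $Y := \overline{\{T_g\mathbf{1}_A : g\in G\}}$. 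The standard bound $\lambda(\mathbf{1}_B)\le d^*(B)$, valid for every invariant mean and every $B\subset G$, immediately gives $\int f\,d\mu_\lambda=\delta$ and the required cylinder inequality for any choice of $\lambda \in \Lambda$.

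For ergodicity, choose $\lambda_0$ to be an extreme point of $\Lambda$ (Krein-Milman) and set $\mu := \mu_{\lambda_0}$. Suppose $\mu = \alpha\mu_1+(1-\alpha)\mu_2$ is a nontrivial decomposition into distinct $T$-invariant probability measures $\mu_1,\mu_2$. Each $\mu_i$ is supported on $Y$ and so induces a positive $G$-invariant functional $\nu_i$ on the $C^*$-subalgebra $\mathcal{A}\subset\ell^\infty(G)$ generated by translates of $\mathbf{1}_A$, via $\nu_i(g\mapsto\phi(T_g\mathbf{1}_A)) := \int\phi\,d\mu_i$ for $\phi\in C(Y)$. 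A combined Hahn-Banach/F{\o}lner-averaging argument extends $\nu_1,\nu_2$ to invariant means $\lambda_1,\lambda_2$ on $\ell^\infty(G)$ satisfying $\alpha\lambda_1+(1-\alpha)\lambda_2=\lambda_0$ and $\lambda_i(\mathbf{1}_A)=\delta$ (the latter because each $\mu_i(\{x(0)=1\})\le d^*(A)=\delta$ while the weighted average is $\delta$), contradicting extremality of $\lambda_0$ inside $\Lambda$. The main obstacle is precisely this simultaneous extension step---producing $G$-invariant $\lambda_1\ne\lambda_2$ that preserve the value $\delta$ on $\mathbf{1}_A$ and jointly reproduce $\lambda_0$ as a convex combination. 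This is the fragment of \cite[Lemma 5.1]{BjorklundProductSet} that the paper invokes; all other ingredients (the cylinder construction, $T$-invariance, the inequality $\lambda\le d^*$, and existence of an extreme mean) are classical.
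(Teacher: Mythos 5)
Your construction of $\mu$ from an invariant mean via cylinder sets, the identity $\int\prod_i f(T_{g_i}x)\,d\mu=\mu(\{x:x(g_i)=1\ \forall i\})$, and the bound $\lambda(1_B)\le d^*(B)$ all match the paper's argument (which likewise follows \cite[Theorem 2.1]{BergelsonMcCutcheon}). The problem is the ergodicity step, where you leave the crux unproved. You decompose a hypothetical non-ergodic $\mu=\alpha\mu_1+(1-\alpha)\mu_2$ and assert that a ``combined Hahn--Banach/F{\o}lner-averaging argument'' lifts $\mu_1,\mu_2$ to invariant means $\lambda_1,\lambda_2$ on all of $\ell^\infty(G)$ with $\alpha\lambda_1+(1-\alpha)\lambda_2=\lambda_0$ exactly. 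This is precisely where the argument breaks: the $\mu_i$ only determine invariant functionals on the $C^*$-subalgebra of $\ell^\infty(G)$ generated by translates of $1_A$, and a Radon--Nikodym density $d\mu_i/d\mu$ is defined only $\mu$-almost everywhere, so it does not pull back to a well-defined element of $\ell^\infty(G)$. One can extend each $\nu_i$ invariantly to $\ell^\infty(G)$, but nothing forces the extensions to recombine to $\lambda_0$ outside the subalgebra. You flag this yourself as ``the main obstacle'' and attribute it to \cite[Lemma 5.1]{BjorklundProductSet}, but that is not what the paper takes from that source: the paper uses the \emph{forward} implication, namely the identity (\ref{eqn:ErgodicMean}) (Scholium 4.4 of \cite{BjorklundProductSet}), which says that for an \emph{extreme} invariant mean $m$ and any invariant mean $\eta$ one has $\int\phi_1(g)\phi_2(g+h)\,dm(g)\,d\eta(h)=m(\phi_1)m(\phi_2)$. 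Applied to $\phi_{f_1},\phi_{f_2}$, this shows directly that the F{\o}lner averages of $\int f_1\cdot T_gf_2\,d\mu$ converge to $\int f_1\,d\mu\int f_2\,d\mu$, which yields ergodicity with no lifting of measures back to means.

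A secondary discrepancy: you take $\lambda_0$ extreme in the slice $\Lambda=\{\lambda:\lambda(1_A)=\delta\}$, whereas the identity above requires extremality in the set of \emph{all} invariant means. This part is harmless: since $\delta=\max_\lambda\lambda(1_A)$, the set $\Lambda$ is a face of the set of all invariant means, so its extreme points are extreme in the full set (this is essentially how the paper produces an extreme mean with $m(1_A)=\delta$). Your proof becomes correct if you replace the decomposition-and-lifting argument by this face observation followed by an appeal to (\ref{eqn:ErgodicMean}); as written, ergodicity is not established.
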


Setting $g_d=t-\sum_{j<d}c_jg_j$ in Proposition \ref{prop:Correspondence} immediately yields the following corollary.

\begin{corollary}\label{cor:Correspondence2}
  If $A\subset G$, $c_i\in \mathbb Z$, and $\tilde{J}$ is defined as in Lemma \ref{lem:Convolution}, then there is an ergodic $G$-system $(X,\mu,T)$ and $f:X\to [0,1]$ such that
\begin{equation}\label{eqn:JgeqL}
\tilde{J}(g_1,\dots,g_{d-1};t) \geq L(g_1,\dots,g_{d-1};t) \qquad \text{for all } g_i, t\in G,
\end{equation}
where $L$ is as defined in (\ref{eqn:Ldef}), with $f_1=\dots=f_d=f$.
\end{corollary}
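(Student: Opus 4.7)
The plan is to invoke Proposition \ref{prop:Correspondence} and specialize its free variables.  Given $A\subset G$, that proposition already hands us an ergodic $G$-system $(X,\mu,T)$ and a measurable $f:X\to [0,1]$ for which the inequality (\ref{eqn:Correspondence}) holds for \emph{every} choice of $g_1,\dots,g_d\in G$.  I would take this same $(X,\mu,T)$ and $f$ as the witnesses for the corollary, so the task reduces entirely to choosing the right substitution for the $d$-tuple in (\ref{eqn:Correspondence}).

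Concretely, given $g_1,\dots,g_{d-1},t\in G$, I would replace the $i$-th dummy variable of (\ref{eqn:Correspondence}) by $c_dg_i$ for $i<d$, and the last dummy variable by $t-\sum_{j<d}c_jg_j$.  The left-hand side of (\ref{eqn:Correspondence}) then becomes
\[
d^*\bigl(\bigl\{h: 1_A(h+c_dg_1)\cdots 1_A(h+c_dg_{d-1})\,1_A\bigl(h+t-\sum_{j<d}c_jg_j\bigr)>0\bigr\}\bigr),
\]
which is exactly $\tilde{J}(g_1,\dots,g_{d-1};t)$ by the definition in Lemma \ref{lem:Convolution}.  The right-hand side becomes
\[
\int \prod_{j<d} f(T_{c_dg_j}x)\cdot f\bigl(T_{t-\sum_{j<d}c_jg_j}x\bigr)\, d\mu(x),
\]
which is precisely $L(g_1,\dots,g_{d-1};t)$ from (\ref{eqn:Ldef}) under the assignment $f_1=\cdots=f_d=f$.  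Since (\ref{eqn:Correspondence}) is universally quantified over $d$-tuples, it survives the substitution and yields (\ref{eqn:JgeqL}) for all $g_1,\dots,g_{d-1},t\in G$.

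There is essentially no obstacle here: the corollary is pure bookkeeping on top of Proposition \ref{prop:Correspondence}, and the only thing to verify is that the two post-substitution expressions coincide with the definitions of $\tilde{J}$ and $L$ on the nose, which is visible by inspection.  Note in particular that hypotheses (1)--(3) of Theorem \ref{thm:GeneralBogoliouboff} and the mutual-distinctness condition on the $c_ig_i$ play no role in establishing this pointwise inequality; they become relevant only later, when one wants to translate positivity of the averaged quantity $I(t)$ into membership in $\vec{c}\cdot A$ via Lemma \ref{lem:Convolution} and Remark \ref{rem:PositiveLimitImpliesPositive}.
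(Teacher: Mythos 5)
Your proposal is correct and is exactly the paper's argument: the paper derives the corollary by substituting $g_d = t-\sum_{j<d}c_jg_j$ (and, implicitly, $g_i\mapsto c_dg_i$ for $i<d$, which you spell out) into Proposition \ref{prop:Correspondence}. Nothing further is needed.
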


We postpone the proof of Proposition \ref{prop:Correspondence} and now prove Lemma \ref{lem:Reduction}

\begin{proof}[Proof of Lemma \ref{lem:Reduction}]
  Let $G$ be a countable abelian group, let $c_1,\dots, c_d\in \mathbb Z$ satisfy hypotheses (1)-(3) in Theorem \ref{thm:GeneralBogoliouboff}, and let $A\subset G$ have $d^*(A)>0$.  Let $\tilde{J}$ and $\tilde{J}_0$ be as in Lemma \ref{lem:Convolution}, so that $c_d \tilde{J}_0\subset \vec{c}\cdot A$, by Lemma \ref{lem:Convolution}.  Apply Corollary \ref{cor:Correspondence2} to find an ergodic $G$-system $(X,\mu,T)$ and $f:X\to [0,1]$ with $\int f\, d\mu = d^*(A)$ satisfying inequality (\ref{eqn:JgeqL}).  This inequality implies $L_0\subset \tilde{J}_0$. By Theorem \ref{thm:DynamicalVersion} and Remark \ref{rem:PositiveLimitImpliesPositive}, we have that $\{t\in G: I(t)>0\}\subset L_0$.  Combining the preceding containments, we have
  \[
  c_d\{t\in G: I(t)>0\} \subset \vec{c}\cdot A.
  \]
  The form of $I(t)$ stated in the conclusion of Theorem \ref{thm:DynamicalVersion} means we can take $\phi(t):=I(t)$ to satisfy the conclusion of Lemma \ref{lem:Reduction}. \end{proof}

Before proving Proposition \ref{prop:Correspondence}, we summarize some background on invariant means; readers can consult \cite{BjorklundProductSet} and \cite{BergelsonMcCutcheon} for further details.  

An \emph{invariant mean} on $\ell^\infty(G)$ is a positive linear functional $\eta$ satisfying $\eta(1_G)=1$; and for every $\phi\in \ell^\infty(G)$ and $t\in G$, we have $\eta(\phi_t)=\eta(\phi)$, where $\phi_t$ is the translate of $\phi$ by $t$ (i.e. $\phi_t(g)=\phi(g+t)$). It is easy to prove that for every F{\o}lner sequence $(F_n)_{n\in \mathbb N}$, there is an invariant mean  $\eta$ such that $\lim_{n\to\infty} \frac{1}{|\Phi_n|}\sum_{g\in \Phi_n} \phi(g) = \eta(\phi)$ for every bounded $\phi:G\to \mathbb C$ for which the limit exists.  This yields the following characterization of upper Banach density: for all $A\subset G$, we have
\[
d^*(A)=\max\{\eta(1_A): \eta \text{ is an invariant mean on } \ell^\infty(G) \}
\]
Since the set of invariant means is a compact convex set in the weak$^*$ topology on the dual of $\ell^\infty(G)$, it contains extreme points.  In other words, there are invariant means which cannot be written as a nontrivial convex combination of other invariant means.  Furthermore, every invariant mean can be written as a convex combination of extreme invariant means, and it follows that for every $A\subset G$, there is an extreme invariant mean $m$ with $m(1_A)=d^*(A)$.

The key identity for ergodicity is most easily expressed using the following notation: given an invariant mean $\eta$ and $\phi\in \ell^\infty(G)$, we write $\int \phi(g)\, d\eta(g)$ for $\eta(\phi)$ (even though the ``integral'' is only finitely additive).  This allows us to write expressions such as $\int \int \psi(g)\phi(g+h)\, d\eta(g)\, d\kappa(h)$ in place of the more cumbersome ``$\kappa(\Phi)$, where $\Phi(h):= \eta(\psi\cdot \phi_h)$.''  The key identity is stated as Scholium 4.4 of \cite{BjorklundProductSet}: if $m$ is an extreme invariant mean and $\eta$ is any invariant mean then for all $\phi_1,\phi_2\in \ell^\infty(G)$, we have
\begin{equation}\label{eqn:ErgodicMean}\int \phi_1(g)\phi_2(g+h) \, dm(g)\, d\eta(h) = m(\phi_1)m(\phi_2).
\end{equation}

\begin{proof}[Proof of Proposition \ref{prop:Correspondence}]
  The proof of \cite[Theorem 2.1]{BergelsonMcCutcheon} produces a (not necessarily ergodic) $G$-system satisfying (\ref{eqn:Correspondence}).  We will repeat the proof therein with one modification: begin by choosing an \emph{extreme} invariant mean on $\ell^\infty(G)$ with $m(1_A)=d^*(A)$. Define the system $(X,\mu,T)$ as follows \begin{enumerate}
  \item[$\cdot$] $X:=\{0,1\}^G$ with the product topology, meaning $X$ is the space of all functions from $G$ into $\{0,1\}$, with the topology of pointwise convergence,
       \item[$\cdot$] $T_g$ is defined by $T_g x(h)= x(h+g)$,
        \item[$\cdot$] and $\mu$ is defined to satisfy
  \[
  \int f\, d\mu = m(\phi_f)
  \]
  for every continuous $f:X\to \mathbb C$, where $\phi_f:G\to \mathbb C$ is given by $\phi_f(g)=f(1_{A+g})$.
  \end{enumerate}
  Having defined $\int f\, d\mu$ for continuous $f$, the Riesz representation theorem shows that there really is a Borel measure $\mu$ satisfying this definition.  We now show that our definition of $\mu$ agrees with the definition of $\mu$ in \cite[Theorem 2.1]{BergelsonMcCutcheon}.  We do so by showing that the two definitions agree on the cylinder sets in $X$.  Writing such a cylinder set $C$ by fixing $h_1,\dots,h_k\in G$ and setting $C:=\{x\in X: x(h_1)=\varepsilon_1,\dots, x(h_k)=\varepsilon_k\}$, we get that $\phi_f(g)=1$ if and only if $g\in (A_1-h_1)\cap (A_2-h_2)\cap \dots\cap (A_k-h_k)$, where $A_i=A$ if $\varepsilon_i=1$ and $G\setminus A$ if $\varepsilon_i=0$.  Thus $\mu(C) = m(\phi_f)=m(\prod_{i=1}^k 1_{A_i-h_i})$, and the latter is the definition of $\mu(C)$ in \cite{BergelsonMcCutcheon}.  Following the argument in \cite{BergelsonMcCutcheon} proves inequality (\ref{eqn:Correspondence}).
  
  To prove ergodicity of $(X,\mu,T)$ we will prove that for all $f_1, f_2\in L^\infty(\mu)$, we have 
  \begin{equation}\label{eqn:ToProveErgodic}\lim_{n\to\infty} \frac{1}{|F_n|}\sum_{g\in F_n} \int f_1\cdot T_gf_2\, d\mu=\int f_1\, d\mu \int f_2\, d\mu
   \end{equation} for each F{\o}lner sequence $(F_n)_{n\in \mathbb N}$.  To see this, note first that it suffices to prove this identity for continuous $f_i$, by approximating bounded $f_i$ with continuous functions.  Now choose an invariant mean $\eta$ on $\ell^\infty(G)$ with the property that $\lim_{n\to\infty} \frac{1}{|F_n|}\sum_{g\in F_n}\phi(g)=\eta(\phi)$ whenever the limit on the left exists.  With this choice of $\eta$, the definition of $\mu$ lets us write the left-hand side of (\ref{eqn:ToProveErgodic}) as
   \[
   \int \phi_{f_1}(h) \phi_{f_2}(h+g)\, dm(h)\, d\eta(g)
   \]
   and the extremality of $m$ allows us to simplify this as $m(\phi_{f_1})m(\phi_{f_2})$, by (\ref{eqn:ErgodicMean}).  By the definition of $\mu$, this is just $\int f_1\,d\mu \int f_2\, d\mu$, as desired.  
   
   Now (\ref{eqn:ErgodicMean}) implies ergodicity of $(X,\mu,T)$: if $A$ is $T$-invariant, then setting $f_1=f_2=1_A$ in (\ref{eqn:ErgodicMean}), we get $\mu(A)=\mu(A)^2$, which means $\mu(A)=0$ or $1$.
\end{proof}


\section{Ergodic theoretic machinery}\label{sec:Ergodic}

\subsection{Densities on \tpdf{$G$}{G}}\label{sec:Densities}

A sequence $\mathbf F=(F_n)_{n\in \mathbb N}$ of finite subsets of $G$ is a \emph{F{\o}lner sequence} if for all $g\in G$, $\lim_{n\to \infty \frac{|F_n\triangle (g+F_n)|}{|F_n|}}=0$.  If $\mathbf F$ is a F{\o}lner sequence and $A\subset G$ the \emph{upper density of $A$ with respect to $\mb F$} is $\bar{d}_{\mb F}(A):\limsup_{n\to \infty} \frac{|A\cap F_n|}{|F_n|}$.  The \emph{upper Banach density of $A$} is $d^*(A):=\sup\{d_{\mb F}(A) : \mb F \text{ is a F{\o}lner sequence}\}$.

\subsection{Measure preserving systems}\label{sec:MeasurePreserving}

We use the usual ergodic theoretic setup for dealing with problems involving configurations in dense subsets of abelian groups; see any of \cite{FurstenbergBook}, \cite{EinsiedlerWard}, \cite{McCutcheonBook}, \cite{KerrLi} for background.

Fix a countable abelian group $G$.  We say that $(X,\mu,T)$ is a \emph{measure preserving $G$-system} (or ``$G$-system'') if $(X,\mu)$ is a probability measure space, and $G$ acts on $X$ by transformations $T_g$ which preserve $\mu$.  If $f\in L^2(\mu)$ and $g\in G$, we write $T_g$ for the element $f\circ T_g$.  Now we have an action of $G$ on $L^2(\mu)$ by unitary operators $T_g$.

We say that $f$ is \emph{$T$-invariant} if $f(T_gx) = f(x)$ for $\mu$-almost every $x$ and all $g\in G$.

A subset $B\subset X$ is \emph{$T$-invariant} if $\mu(B \triangle T_gB) = 0$ for every $g\in G$.

The system $(X,\mu,T)$ is \emph{ergodic} if the only $T$-invariant sets $B$ have $\mu(B)=0$ or $\mu(B)=1$.

We say that $(X,\mu,T)$ has \emph{finitely many ergodic components} if the closed  subspace of $L^2(\mu)$ consisting of $T$-invariant functions is finite dimensional.

\begin{lemma}\label{lem:FinIndexFinComponents}
If $(X,\mu,T)$ is ergodic and $H$ is a finite index subgroup of $G$, then the $H$-action given by restricting $T_g$ to $g\in H$ has finitely many ergodic components.
\end{lemma}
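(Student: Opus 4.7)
The plan is to show directly that $V := \{f \in L^2(\mu) : T_h f = f \text{ for all } h \in H\}$ has dimension at most $[G:H]$. Equivalently, writing $\mathcal{I}_H$ for the $\sigma$-algebra of $H$-invariant measurable subsets of $X$ (modulo $\mu$-null sets), I want to show that $(X, \mathcal{I}_H, \mu)$ has at most $[G:H]$ atoms, since then $V = L^2(X, \mathcal{I}_H, \mu)$ is finite dimensional.

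The first step is to observe that $G$ acts on $\mathcal{I}_H$ and this action factors through the finite quotient $G/H$. Indeed, because $G$ is abelian, for $B \in \mathcal{I}_H$, $g \in G$, and $h \in H$, we have $\mu(T_h T_g B \triangle T_g B) = \mu(T_g T_h B \triangle T_g B) = \mu(T_h B \triangle B) = 0$, so $T_g B \in \mathcal{I}_H$. Since every $T_h$ with $h \in H$ fixes $\mathcal{I}_H$ (mod null), the $G$-action on $\mathcal{I}_H$ descends to $G/H$.

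The second and main step is to extract a uniform lower bound on the measure of any nonnull element of $\mathcal{I}_H$, using ergodicity of the $G$-action. Set $k := [G:H]$ and fix coset representatives $g_1, \dots, g_k$ for $H$ in $G$. Given $A \in \mathcal{I}_H$ with $\mu(A) > 0$, the orbit of $A$ coincides (mod null) with the finite collection $\{T_{g_1} A, \dots, T_{g_k} A\}$. Its union $U := \bigcup_{i=1}^k T_{g_i} A$ lies in $\mathcal{I}_H$ and is $G$-invariant mod null, so $\mu(U) = 1$ by ergodicity. Subadditivity then gives $k \mu(A) \geq \mu(U) = 1$, i.e.\ $\mu(A) \geq 1/k$.

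Consequently every element of $\mathcal{I}_H$ has measure in $\{0\} \cup [1/k, 1]$, so $(X, \mathcal{I}_H, \mu)$ cannot support more than $k$ disjoint sets of positive measure and therefore consists of at most $k$ atoms. This yields $\dim V \leq k < \infty$, which is the conclusion. The only real friction in writing this up cleanly is the standard mod-null bookkeeping, particularly choosing Borel representatives so that expressions like ``the orbit of $A$'' and ``the union $\bigcup_i T_{g_i} A$'' denote honest sets rather than equivalence classes; no machinery beyond the ergodicity hypothesis and commutativity of $G$ is required.
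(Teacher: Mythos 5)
Your proposal is correct and follows essentially the same route as the paper: both arguments fix coset representatives $g_1,\dots,g_k$, use ergodicity of the full $G$-action to show that any $H$-invariant set of positive measure satisfies $\mu(A)\geq 1/k$, and conclude that $\mathcal I_H$ has at most $k$ atoms, so the space of $H$-invariant $L^2$ functions is finite dimensional.
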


\begin{proof}
  Let $B\subset X$ be $H$-invariant, and let $g_1,\dots, g_k$ be coset representatives of $H$.  Now
  \[X\sim_{\mu} \bigcup_{g\in G} T_g B \sim_{\mu} \bigcup_{i=1}^k T_{g_i} B,\] so $\mu(B)\geq \frac{1}{k}$.  Thus $\inf\{\mu(B):B\subset X \text{ is } H\text{-invariant}\}>0$, and the algebra $\mathcal I_H$ of $H$-invariant sets is generated (up to $\mu$-measure $0$) by finitely many atoms $B_1,\dots,B_r$.  Now every $H$-invariant $f\in L^2(\mu)$ is $\mathcal I_H$-measurable, so the functions $\psi_i:=\frac{1}{\mu(B_i)}1_{B_i}$ form an orthonormal basis of the space of $H$-invariant elements of $L^2(\mu)$.
\end{proof}

We will use the Mean Ergodic Theorem for unitary actions; see \cite[Theorem 4.22]{KerrLi} for exposition.

\begin{theorem}[Mean ergodic theorem]\label{thm:Ergodic}
Let $U$ be an action of $G$ on a Hilbert space $\mathcal H$ by unitary operators and $x\in \mathcal H$.  If $(F_n)_{n\in \mathbb N}$ is a F{\o}lner sequence, then
\[
\lim_{n\to\infty} \frac{1}{F_n}\sum_{g\in F_n} U_gx = P_Ix,
\]
where $P_Ix$ is the orthogonal projection of $x$ onto the closed subspace of $\mathcal H$ consisting of $U$-invariant vectors.
\end{theorem}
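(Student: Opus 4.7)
The plan is to follow the standard von Neumann--style argument: decompose $\mathcal H$ as an orthogonal sum of the $U$-invariant subspace $\mathcal I$ and the closure of the coboundary subspace
\[
\mathcal N:=\overline{\operatorname{span}}\{U_g y - y : g\in G,\, y\in \mathcal H\},
\]
and then analyze the averaging operators $A_n := \frac{1}{|F_n|}\sum_{g\in F_n} U_g$ on each summand. The identity $P_I = $ projection onto $\mathcal I$ is what we want $A_n$ to converge to strongly.

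The first substantive step is to show $\mathcal I = \mathcal N^\perp$, hence $\mathcal H = \mathcal I \oplus \mathcal N$. The inclusion $\mathcal I \subseteq \mathcal N^\perp$ is immediate from $\langle x, U_g y - y\rangle = \langle U_g^{-1}x - x, y\rangle = 0$ for $x\in\mathcal I$. For the converse, if $x \in \mathcal N^\perp$ then $\langle U_g^{-1}x - x, y\rangle = 0$ for every $g\in G$ and $y\in\mathcal H$; since the $U_g$ are unitary, this forces $U_g^{-1}x = x$, so $x\in\mathcal I$. This is the step with any real content; the rest is mechanical, and it is the one place where unitarity (rather than mere boundedness) is used.

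Next I would verify the limiting behavior of $A_n$ on a dense subset of each summand. On $\mathcal I$ there is nothing to do: $A_n x = x = P_I x$. For a single coboundary $x = U_h y - y$, a telescoping gives
\[
A_n x = \frac{1}{|F_n|}\Bigl(\sum_{g\in h+F_n} U_g y - \sum_{g\in F_n} U_g y\Bigr),
\]
whence $\|A_n x\| \leq \frac{|F_n \triangle (h+F_n)|}{|F_n|}\|y\| \to 0$ by the F{\o}lner property. Linearity extends this to finite linear combinations of coboundaries, and since $\|A_n\|\leq 1$ uniformly, a standard $\varepsilon/3$ approximation extends the convergence $A_n x \to 0$ to all $x\in\mathcal N$.

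Combining the two cases on the decomposition $x = P_I x + (x - P_I x)$, with $x - P_I x \in \mathcal N$, yields $A_n x \to P_I x$ for every $x\in\mathcal H$. The main obstacle is really just the duality identification $\mathcal I = \mathcal N^\perp$; the F{\o}lner condition enters only through the coboundary estimate, and unitarity enters only to convert the orthogonality condition on $\mathcal N^\perp$ into genuine $U$-invariance.
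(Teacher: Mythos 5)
Your proof is correct and complete: the decomposition $\mathcal H=\mathcal I\oplus\overline{\mathcal N}$ via the identity $\mathcal I=\mathcal N^\perp$, the telescoping/F{\o}lner estimate on coboundaries, and the $\varepsilon/3$ extension using $\|A_n\|\leq 1$ together give exactly the stated convergence. The paper does not prove this theorem itself---it cites it as standard (Theorem 4.22 of Kerr--Li)---and your argument is precisely the standard von Neumann-style proof that the reference gives, so there is nothing to reconcile.
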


Specializing to the unitary action associated to a $G$-system yields the following.

\begin{corollary}\label{cor:Ergodic}
  Let $(X,\mu,T)$ be a $G$-system and $f\in L^2(\mu)$.  Then
  \[\lim_{n\to \infty} \frac{1}{|F_n|}\sum_{g\in F_n} T_gf =P_{\mathcal I}f,\] where $P_{\mathcal I}$ is the orthogonal projection onto the closed subspace of $L^2(\mu)$ consisting of $T$-invariant functions.
\end{corollary}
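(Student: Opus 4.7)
The plan is to deduce Corollary \ref{cor:Ergodic} directly from Theorem \ref{thm:Ergodic} by specializing the abstract unitary statement to the Koopman representation of the $G$-system. First I would set up that representation explicitly: since each $T_g : X \to X$ preserves $\mu$, the induced operator $U_g : L^2(\mu) \to L^2(\mu)$ defined by $U_g h := h\circ T_g$ is an isometry, and in fact unitary (with inverse $U_{-g}$). Because $T$ is a $G$-action on $X$, the assignment $g \mapsto U_g$ is a unitary representation of $G$ on the Hilbert space $\mathcal H := L^2(\mu)$; with the convention introduced in \S\ref{sec:MeasurePreserving} that $T_g f$ stands for $f\circ T_g$, the operators $U_g$ and $T_g$ coincide.

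Next I would apply Theorem \ref{thm:Ergodic} verbatim to this representation with $x = f$. For any F{\o}lner sequence $(F_n)_{n\in\mathbb N}$ in $G$, that theorem yields
\[
\lim_{n \to \infty} \frac{1}{|F_n|} \sum_{g \in F_n} T_g f \;=\; P f \qquad \text{in } L^2(\mu),
\]
where $P$ is the orthogonal projection of $\mathcal H$ onto the closed subspace $\mathcal H^U$ of $U$-invariant vectors, namely those $h \in L^2(\mu)$ satisfying $T_g h = h$ in $L^2$ for every $g \in G$.

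The remaining point is to identify $\mathcal H^U$ with the space $\mathcal I \subset L^2(\mu)$ of $T$-invariant functions in the sense of \S\ref{sec:MeasurePreserving}, so that $P = P_{\mathcal I}$. For a fixed $g$, the identity $U_g h = h$ in $L^2$ is plainly equivalent to $h\circ T_g = h$ $\mu$-almost everywhere; the only subtlety is that ``$T$-invariant'' requires a \emph{single} full-measure set on which $h(T_g x) = h(x)$ holds simultaneously for all $g$, whereas $U$-invariance at each $g$ only supplies a full-measure set depending on $g$. This gap is closed by countability of $G$: intersecting the countably many full-measure sets (one for each $g \in G$) yields a single full-measure set on which $h$ is pointwise $T$-invariant, so $\mathcal H^U = \mathcal I$, giving the corollary.

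I do not expect any genuine obstacle in this argument; the corollary is a formal specialization of Theorem \ref{thm:Ergodic} plus the routine verification that the Koopman operators are unitary and that, over a countable group, almost-everywhere invariance at each $g$ upgrades to simultaneous pointwise invariance on a common conull set.
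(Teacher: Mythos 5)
Your proposal is correct and matches the paper's approach: the paper obtains the corollary precisely by specializing the mean ergodic theorem to the Koopman (unitary) representation of the $G$-system, which is exactly what you do. The extra care you take in identifying the $U$-invariant vectors with the $T$-invariant functions via countability of $G$ is a routine verification the paper leaves implicit.
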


\begin{observation}\label{obs:PIfinite} If $(X,\mu,T)$ has only finitely many ergodic components, then $X$ can be partitioned into $\mu$-measurable $G$-invariant subsets $X_1,\dots,X_k \subset X$, and for each $f\in L^2(\mu)$, the projection $P_{\mathcal I}f$ can be written as
\[
\sum_{i=1}^k \Bigl(\frac{1}{\mu(X_i)}\int f\, 1_{X_i} \, d\mu\Bigr) 1_{X_i}.
\]
\end{observation}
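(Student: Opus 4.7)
The plan is to show that the $\sigma$-algebra $\mathcal I$ of $T$-invariant measurable subsets of $X$ is, modulo null sets, generated by finitely many atoms, and then to apply the standard formula for orthogonal projection onto the finite-dimensional subspace of $T$-invariant functions.

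First I would invoke the standard correspondence between invariant sets and invariant functions: an element $f\in L^2(\mu)$ is $T$-invariant if and only if it agrees almost everywhere with an $\mathcal I$-measurable function. One direction is immediate from the fact that indicators of $T$-invariant sets are $T$-invariant; the other follows by checking that each level set $\{f\leq c\}$ is $T$-invariant mod null sets (and that a countable generating family of such level sets determines $f$). Hence the closed subspace of $T$-invariant vectors in $L^2(\mu)$ is exactly $L^2(X,\mathcal I,\mu)$.

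Next, I would argue that the finite-dimensionality hypothesis forces $\mathcal I$ (mod null sets) to be a finite Boolean algebra. Indeed, if $B_1,B_2,\dots$ were infinitely many pairwise disjoint elements of $\mathcal I$ of positive $\mu$-measure, then the $1_{B_j}$ would form an infinite orthogonal family of nonzero elements of $L^2(X,\mathcal I,\mu)$, contradicting $\dim L^2(X,\mathcal I,\mu)<\infty$. Consequently, $\mathcal I$ has finitely many atoms, say $X_1,\dots,X_k$, which form a $\mu$-partition of $X$ (with $k$ equal to the dimension of the invariant subspace). Each $X_i$ is $G$-invariant modulo a null set, and since $G$ is countable, removing a single null set replaces each $X_i$ with a genuinely $G$-invariant representative.

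Finally, because every $T$-invariant $L^2$ function is $\mathcal I$-measurable and hence constant a.e.\ on each atom $X_i$, the indicators $1_{X_1},\dots,1_{X_k}$ span the space of $T$-invariant functions and are mutually orthogonal. The orthogonal projection onto their span is
\[
P_{\mathcal I}f=\sum_{i=1}^k \frac{\langle f,\,1_{X_i}\rangle}{\|1_{X_i}\|^2}\,1_{X_i}=\sum_{i=1}^k\Bigl(\frac{1}{\mu(X_i)}\int f\cdot 1_{X_i}\,d\mu\Bigr)1_{X_i},
\]
which is the stated formula. The only point requiring any care is the passage from $T$-invariance mod null sets to strict $G$-invariance of the atoms, but since $G$ is countable this is a one-line null-set argument; I do not expect any real obstacle.
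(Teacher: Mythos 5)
Your proof is correct and takes essentially the same route the paper relies on: the paper states this as an observation without proof, but the identical argument --- finite dimensionality of the invariant subspace forces the invariant $\sigma$-algebra to consist (mod null sets) of finitely many atoms, whose normalized indicators give an orthonormal basis, so the projection is the standard conditional-expectation formula --- appears explicitly in the proof of Lemma \ref{lem:FinIndexFinComponents}. The only superfluous step is your upgrade from mod-null invariance of the atoms to strict $G$-invariance: the paper defines a $T$-invariant set by $\mu(B\triangle T_gB)=0$ for all $g$, so this is not required (though your countability argument for it is fine).
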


\subsection{Eigenfunctions}  We say that $f\in L^2(\mu)$ is an \emph{eigenfunction} of $(X,\mu,T)$ if there is a character $\chi\in \widehat{G}$ such that $T_gf \sim_\mu \chi(g)f$ for every $g\in G$.  Let $\mathcal{AP}$ denote the closure of the span of the eigenfunctions of $(X,\mu,T)$ in $L^2(\mu)$, and write $\mathcal{WM}$ for its orthogonal complement.  Note that $\mathcal{WM}$ is a closed, $T$-invariant subspace of $L^2(\mu)$.

Given a function $\psi:G\to \mathbb C$, we say that \emph{$\psi$ tends to $0$ in density} if for every F{\o}lner sequence $\mb F$, we have $\lim_{n\to\infty} \frac{1}{|F_n|}\sum_{g\in F_n}|\psi(g)|=0$.  Equivalently, $\psi\to 0$ in density if for all $\varepsilon>0$, the set $A_{\varepsilon}:=\{g:|\psi(g)|>\varepsilon\}$ has $d^*(A_{\varepsilon})=0$.

\begin{lemma}\label{lem:WM}  Let $(X,\mu,T)$ be a $G$-system,  $f\in \mathcal{WM}$ and $h\in L^2(\mu)$.

\begin{enumerate} \item[(i)] The correlation sequence $c(g):=\int f\cdot T_g h\, d\mu$ tends to $0$ in density.
\item[(ii)] If $T$ has finitely many ergodic components, then $c_{\mathcal I}(g):=\|P_{\mathcal I}(f\cdot T_g h)\|_{L^2(\mu)}$ tends to $0$ in density.
    \end{enumerate}
\end{lemma}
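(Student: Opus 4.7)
The plan for part (i) is to strengthen the statement to an $L^2$-average: by Cauchy--Schwarz, it suffices to prove that $\frac{1}{|F_n|}\sum_{g\in F_n}|c(g)|^2\to 0$ for every F{\o}lner sequence $\mathbf F$. To compute this $L^2$-average I would move to the product system via
\[
|c(g)|^2 = \bigl\langle f\otimes \bar f,\ (T_g\otimes T_g)(h\otimes \bar h)\bigr\rangle_{L^2(\mu\otimes\mu)},
\]
and apply Corollary \ref{cor:Ergodic} to the unitary action $g\mapsto T_g\otimes T_g$ on $L^2(\mu\otimes\mu)$, obtaining the limit $\langle f\otimes \bar f,\ P_{\mathcal I}^{(2)}(h\otimes \bar h)\rangle$, where $P_{\mathcal I}^{(2)}$ projects onto the $(T\otimes T)$-invariants in $L^2(\mu\otimes\mu)$.

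Everything then reduces to verifying that $f\otimes \bar f$ is orthogonal to the $(T\otimes T)$-invariant subspace, and for this I would invoke spectral theory. Let $E$ be the projection-valued spectral measure on $\widehat G$ of the Koopman representation on $L^2(\mu)$, and define the cross-spectral measure $\sigma_{h,f}(B):=\langle E(B)h,f\rangle$; by construction its Fourier--Stieltjes transform at $g$ equals $c(g)$. Because the range of $E(\{\chi\})$ is the $\chi$-eigenspace, which lies in $\mathcal{AP}$, the assumption $f\in \mathcal{WM}$ forces $E(\{\chi\})f=0$ for every $\chi\in \widehat G$, so $\sigma_{h,f}$ has no atoms. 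Expanding $|c(g)|^2$ through the spectral representation and using that the F{\o}lner average $\frac{1}{|F_n|}\sum_{g\in F_n}\chi(g)$ tends to $0$ for every nontrivial character (a one-dimensional case of Corollary \ref{cor:Ergodic}), the limit evaluates to $\sum_{\chi}|\sigma_{h,f}(\{\chi\})|^2 = 0$.

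Part (ii) reduces to part (i) by an ergodic decomposition. The finitely-many-ergodic-components hypothesis partitions $X$ into $G$-invariant measurable sets $X_1,\dots,X_k$ on which $T$ acts ergodically relative to the normalized restrictions $\mu_i := \mu(X_i)^{-1}\mu|_{X_i}$. By Observation \ref{obs:PIfinite},
\[
c_{\mathcal I}(g)^2 = \sum_{i=1}^{k} \frac{1}{\mu(X_i)}\Bigl|\int_{X_i} f\cdot T_g h\, d\mu\Bigr|^2.
\]
Any eigenfunction of $T|_{X_i}$ extends by zero to an eigenfunction of $T$ on the whole system, so $f|_{X_i}$ remains orthogonal to every eigenfunction of $T|_{X_i}$ and therefore lies in the $\mathcal{WM}$-subspace of each restricted system. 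Applying part (i) to each $(X_i,\mu_i,T)$ shows that every summand above tends to $0$ in density, so $c_{\mathcal I}$ does as well.

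The main obstacle is the spectral orthogonality step in part (i); everything else is routine bookkeeping and direct use of the mean ergodic theorem. If one prefers to avoid the spectral theorem, a van der Corput--type inequality for F{\o}lner averages provides an alternative route to the $L^2$-statement, reducing it to a statement about the self-correlations $\langle T_{g-g'}f,f\rangle$ that can be treated using $f\in \mathcal{WM}$ directly.
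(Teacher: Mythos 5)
Your proposal is correct, but it is doing genuinely more work than the paper, which disposes of part (i) by citing Corollary 1.5 of \cite{BergelsonRosenblatt} and of part (ii) by combining (i) with Observation \ref{obs:PIfinite}. In effect you have supplied a proof of the black-boxed ingredient: the SNAG/Stone spectral measure $E$ on $\widehat{G}$, the identification of $\mathcal{WM}$ with the functions whose spectral measure is non-atomic (via $E(\{\chi\})f=0$), and the Wiener-type computation $\lim_n \frac{1}{|F_n|}\sum_{g\in F_n}|c(g)|^2=\sum_\chi|\sigma_{h,f}(\{\chi\})|^2$ using dominated convergence and the fact that F{\o}lner averages of nontrivial characters vanish (this last is really an instance of Theorem \ref{thm:Ergodic} for the one-dimensional representation, rather than of Corollary \ref{cor:Ergodic}). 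This is exactly the standard route to the cited result, and all the steps check out: the Cauchy--Schwarz reduction to the $L^2$-average, the atomlessness of $\sigma_{h,f}$, and the diagonal computation are all sound. One small structural remark: your opening move through the product system $T_g\otimes T_g$ is vestigial --- once you compute the limit of $\frac{1}{|F_n|}\sum|c(g)|^2$ directly from the spectral representation you never use the identification of that limit with $\langle f\otimes\bar f, P^{(2)}_{\mathcal I}(h\otimes\bar h)\rangle$, and characterizing the $(T\otimes T)$-invariants would be an extra (and in the non-ergodic case delicate) step you are wise to avoid. For part (ii), your extension-by-zero argument showing $f|_{X_i}\in\mathcal{WM}(X_i,\mu_i,T)$ is valid; a slightly shorter route, closer to what the paper intends, is to note that $\int_{X_i}f\cdot T_gh\,d\mu=\int f\cdot T_g(h1_{X_i})\,d\mu$ since $X_i$ is invariant, so each summand is already covered by part (i) applied in the original system with $h$ replaced by $h1_{X_i}$.
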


\begin{proof}
Part (i) here follows from Corollary 1.5 of \cite{BergelsonRosenblatt} (cf. Proposition 2.20 and Propsition D.17 of \cite{KerrLi}).  Part (ii) follows from Part (i) and Observation \ref{obs:PIfinite}.
\end{proof}

As usual for ergodic theoretic proofs of Roth's theorem, we need a van der Corput lemma.  The version we use is \cite[Theorem 2.12]{BergelsonMoreira}, specialized to countable discrete abelian groups; cf. \cite[Lemma 4.2]{BMZ}.

\begin{lemma}[van der Corput lemma]\label{lem:vdC}
Let $(x_g)_{g\in G}$ be a bounded collection of elements of a Hilbert space $\mathcal H$ and $\mb F$ be a F{\o}lner sequence.  If
\[
\lim_{k\to\infty }\frac{1}{|F_k|} \sum_{h\in F_k} \lim_{n\to \infty}\frac{1}{|F_n|} \sum_{g\in F_n} \langle x_{g+h}, x_g\rangle = 0
\]
then $\lim_{n\to\infty}\Bigl\|\frac{1}{|F_n|}\sum_{g\in F_n} x_g\Bigr\|_{\mathcal H}= 0$.
\end{lemma}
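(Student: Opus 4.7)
The plan is the classical van der Corput argument: expand the squared norm of $a_n := \frac{1}{|F_n|}\sum_{g \in F_n} x_g$ via a Cauchy--Schwarz trick into an iterated average of inner products of shifts, and then invoke the hypothesis. Let $\gamma(h) := \lim_{n\to\infty} \frac{1}{|F_n|}\sum_{g \in F_n}\langle x_{g+h}, x_g\rangle$, which exists for every $h \in G$ by the inner limit in the hypothesis, and let $\alpha_k := \frac{1}{|F_k|}\sum_{h \in F_k}\gamma(h)$, so that $\alpha_k \to 0$ by hypothesis.

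Fixing $k$, the F{\o}lner property of $F_n$ gives that for each $h \in F_k$, $\frac{1}{|F_n|}\sum_g x_{g+h} - a_n \to 0$ in norm as $n \to \infty$, and averaging over $h \in F_k$ gives $\|a_n - a_n^{(k)}\|_{\mathcal H} \to 0$, where $a_n^{(k)} := \frac{1}{|F_n||F_k|}\sum_{g \in F_n,\, h \in F_k} x_{g+h}$. Applying Cauchy--Schwarz on the outer average in $g$ and expanding the resulting inner norm squared,
\[
\|a_n^{(k)}\|_{\mathcal H}^2 \leq \frac{1}{|F_n|}\sum_{g \in F_n}\Bigl\|\frac{1}{|F_k|}\sum_{h \in F_k} x_{g+h}\Bigr\|^2 = \frac{1}{|F_k|^2}\sum_{h_1, h_2 \in F_k}\frac{1}{|F_n|}\sum_{g \in F_n}\langle x_{g+h_1}, x_{g+h_2}\rangle.
\]
For each fixed $h_1, h_2$, the substitution $g \mapsto g - h_2$ together with the F{\o}lner property of $F_n$ shows that the inner average over $g$ tends as $n \to \infty$ to $\gamma(h_1 - h_2)$. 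Taking $n \to \infty$ for each fixed $k$ yields
\[
\limsup_{n \to \infty}\|a_n\|_{\mathcal H}^2 \leq \Gamma_k := \frac{1}{|F_k|^2}\sum_{h_1, h_2 \in F_k}\gamma(h_1 - h_2).
\]

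The remaining and most delicate step is to show $\Gamma_k \to 0$ as $k \to \infty$. Rewriting $\Gamma_k = \frac{1}{|F_k|}\sum_{h_2 \in F_k}\bigl(\frac{1}{|F_k|}\sum_{h \in F_k - h_2}\gamma(h)\bigr)$, for each fixed $h_2$ the inner sum differs from $\alpha_k$ by at most $2\|\gamma\|_\infty |F_k \triangle (F_k + h_2)|/|F_k|$, which tends to $0$ by the F{\o}lner property. The main obstacle is that this decay is not uniform in $h_2 \in F_k$, so one cannot pass the limit in $k$ inside the outer average over $h_2$ directly. This is resolved by passing to a subsequence $F_{k_j}$ of the F{\o}lner sequence satisfying a sufficiently strong (``tempered'') F{\o}lner condition, obtainable by a standard diagonal argument in any countable abelian group; along such a subsequence the defect $|F_{k_j}\triangle(F_{k_j}+h_2)|/|F_{k_j}|$ is uniformly small for $h_2\in F_{k_j}$, which suffices to conclude $\Gamma_{k_j} \to 0$. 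Since the inequality $\limsup_n\|a_n\|_{\mathcal H}^2 \leq \Gamma_k$ holds for \emph{every} $k$, applying it along this subsequence finishes the proof.
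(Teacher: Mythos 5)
The paper does not actually prove this lemma; it quotes it from \cite{BergelsonRuzsa}-style literature, namely Theorem 2.12 of \cite{BergelsonMoreira}. Judged on its own merits, your argument opens correctly but has a genuine gap at the end. The reduction to $\limsup_n\|a_n\|^2\le \Gamma_k:=\frac{1}{|F_k|^2}\sum_{h_1,h_2\in F_k}\gamma(h_1-h_2)$ for every $k$ is the standard van der Corput computation and is fine (modulo the small caveat that the hypothesis literally guarantees the existence of $\gamma(h)$ only for $h\in\bigcup_k F_k$ while you use it on $F_k-F_k$; this is routinely handled with $\limsup$'s). The problem is the deduction of $\Gamma_{k_j}\to 0$ from $\alpha_k\to 0$. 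The property you try to arrange by passing to a subsequence---that $|F_{k}\triangle(F_{k}+h_2)|/|F_{k}|$ is uniformly small over $h_2\in F_{k}$---is not a consequence of the F{\o}lner property and cannot be obtained by thinning \emph{any} F{\o}lner sequence in $\mathbb Z$: for $F_k=\{0,\dots,k-1\}$ one has $|F_k\cap(F_k+h)|=k-h$ for $h\in F_k$, so $\frac{1}{k}\sum_{h\in F_k}|F_k\triangle(F_k+h)|/|F_k|\approx 1$; the defect is not even small \emph{on average} over $h_2\in F_k$, let alone uniformly, so your triangle-inequality bound on $|\Gamma_k-\alpha_k|$ does not tend to $0$. (``Temperedness'' is Shulman's condition $|\bigcup_{j<k}(-F_j+F_k)|\le C|F_k|$, relevant to pointwise ergodic theorems; it does not give self-invariance either.) The uniform property you want holds essentially only for F{\o}lner sequences of subgroups, e.g.\ in $\mathbb F_p^\omega$, and fails in $\mathbb Z$, which is squarely within the scope of the lemma.

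The cancellation that makes $\Gamma_k\to 0$ is not combinatorial but spectral: it comes from positive definiteness of $\gamma$. The same shift-and-expand computation you already use shows that for any finite $H\subset G$ and scalars $(c_h)_{h\in H}$ one has $\sum_{h_1,h_2\in H}c_{h_1}\overline{c_{h_2}}\,\gamma(h_1-h_2)=\lim_{n}\frac{1}{|F_n|}\sum_{g\in F_n}\bigl\|\sum_{h\in H}c_h x_{g+h}\bigr\|^2\ge 0$, so $\gamma$ is positive definite on $G$. By the GNS (equivalently, Bochner--Herglotz) construction there are a unitary representation $U$ of $G$ on a Hilbert space and a vector $v$ with $\gamma(h)=\langle U_h v,v\rangle$. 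The mean ergodic theorem (Theorem \ref{thm:Ergodic}) then gives
\[
\alpha_k=\Bigl\langle \tfrac{1}{|F_k|}\sum_{h\in F_k}U_h v,\; v\Bigr\rangle\longrightarrow \|Pv\|^2
\quad\text{and}\quad
\Gamma_k=\Bigl\|\tfrac{1}{|F_k|}\sum_{h\in F_k}U_h v\Bigr\|^2\longrightarrow \|Pv\|^2,
\]
where $P$ is the projection onto the $U$-invariant vectors. Hence $\alpha_k\to 0$ forces $\|Pv\|=0$ and therefore $\Gamma_k\to 0$ along the full sequence, which closes your argument with no subsequence extraction or self-invariance assumption.
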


The next lemma will be used in conjunction with Lemma \ref{lem:vdC}. 

\begin{lemma}\label{lem:WMCorrelations}
  If $f_1\in \mathcal{WM}$, $f_0\in \mathcal{AP}$, and $bG$ has finite index in $G$, then
  \[
  \lim_{n\to \infty} \frac{1}{|F_n|} \sum_{g\in F_n}T_{ag}f_{0}\,T_{bg}f_{1}=0
  \]
  in $L^2(\mu)$.
\end{lemma}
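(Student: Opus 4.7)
My plan is to reduce, by linearity and approximation in $L^2$, to the case where $f_0$ is a single bounded eigenfunction, say $T_g f_0 = \lambda(g) f_0$ for some character $\lambda \in \widehat G$. This reduction is legitimate because any eigenfunction has $|f_0|$ $T$-invariant, so truncating $f_0$ to $\{|f_0|\leq N\}$ yields bounded eigenfunctions approximating it in $L^2$; hence bounded eigenfunctions are dense in $\mathcal{AP}$. With $f_0$ a bounded eigenfunction, $T_{ag}f_0 = \lambda(ag)f_0$, so
\[
\frac{1}{|F_n|}\sum_{g\in F_n}T_{ag}f_0\cdot T_{bg}f_1 \;=\; f_0\cdot\frac{1}{|F_n|}\sum_{g\in F_n}\lambda(ag)\,T_{bg}f_1,
\]
and since multiplication by $f_0$ is bounded on $L^2(\mu)$, it suffices to show that the second factor tends to $0$ in $L^2$.

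Next, I would observe that the twisted operators $U_g := \lambda(ag)\,T_{bg}$ define a unitary $G$-action on $L^2(\mu)$, since $U_{g+h} = \lambda(a(g+h))T_{b(g+h)} = U_g U_h$. By the mean ergodic theorem (Theorem \ref{thm:Ergodic}) applied to $U$, the average $\frac{1}{|F_n|}\sum_{g\in F_n}U_g f_1$ converges in $L^2$ to $P_U f_1$, the orthogonal projection of $f_1$ onto the closed subspace of $U$-invariant vectors. The problem therefore reduces to proving $P_U f_1 = 0$, i.e., that $f_1$ is orthogonal to every $U$-invariant vector.

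The crux of the argument, and what I expect to be the main obstacle, is showing that every $U$-invariant vector $v$ lies in $\mathcal{AP}$. By $U$-invariance, $T_{bg}v = \overline{\lambda(ag)}\,v$ for every $g \in G$. Setting $H := bG$, which has finite index in $G$ by hypothesis, I fix coset representatives $g_1 = 0, g_2,\dots,g_k$ and consider $W := \operatorname{span}\{T_{g_j}v : 1\leq j\leq k\}$. Then $W$ is $G$-invariant: for $g\in G$ and $1\leq j\leq k$, writing $g + g_j = g_{j'} + bh$ gives $T_{g+g_j}v = T_{g_{j'}}T_{bh}v = \overline{\lambda(ah)}\,T_{g_{j'}}v \in W$. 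Since $W$ is a finite-dimensional unitary representation of the abelian group $G$, it decomposes into joint character-eigenspaces, so $v$ lies in the span of $G$-eigenfunctions and hence in $\mathcal{AP}$. Orthogonality $\langle f_1,v\rangle = 0$ then follows from $f_1 \in \mathcal{WM} = \mathcal{AP}^\perp$, giving $P_U f_1 = 0$ as required. The finite-index hypothesis on $bG$ is essential at exactly this step: without it, $W$ would in general be infinite-dimensional, and the promotion from twisted $bG$-eigenfunction to $G$-almost periodic function would break down.
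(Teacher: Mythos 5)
Your proof is correct and follows essentially the same route as the paper: reduce to the case of a single eigenfunction, absorb the character into a twisted unitary action $U_g=\lambda(ag)T_{bg}$, apply the mean ergodic theorem, and show the $U$-invariant vectors lie in $\mathcal{AP}$ so that $f_1\perp\mathcal{AP}$ kills the limit. Your finite-dimensional representation argument (translating a $U$-invariant vector by coset representatives of $bG$ and diagonalizing the resulting finite-dimensional unitary $G$-representation) supplies the justification for the containment of the $U$-invariant subspace in $\mathcal{AP}$, which the paper asserts without proof, and you correctly identify that as the step where the finite-index hypothesis on $bG$ is used.
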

\begin{proof}
  Since $f_0\in \mathcal{AP}$, it is a linear combination of eigenfunctions $\psi$, and for each such $\psi$ there is a character $\chi$ of $G$ so that $T_{g}\psi=\chi(g)\psi$.  We are therefore reduced to proving that when $\chi$ is a character of $g$,
  \[
  \lim_{n\to \infty} \frac{1}{|F_n|}\sum_{g\in F_n} \chi(g)T_{bg}f_{1}=0
  \]
  in $L^2(\mu)$.  Applying the mean ergodic theorem to the unitary action $U$ on $L^2(\mu)$ defined by $U_gf = \chi(g)T_{bg}f$, we get that the limit of the averages above is the orthogonal projection of $f_1$ onto the space of $U_g$-invariant functions, which is contained in $\mathcal{AP}$.  Since $f_1\perp \mathcal{AP}$, the limit is $0$.
\end{proof}

\begin{lemma}\label{lem:RothReduce}
  If $p\in L^2(\mu)$ and $q\in \mathcal{WM}$, $a,b\in G$, and both $bG$ and $(b-a)G$ have finite index in $G$, then
  \begin{equation}\label{eqn:agbg}
  \lim_{n\to\infty} \frac{1}{|F_n|}\sum_{g\in F_n} T_{ag}p\,T_{bg}q= 0
  \end{equation}
  in $L^2(\mu)$.
\end{lemma}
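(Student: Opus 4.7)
The strategy is to split $p = p_0 + p_1$ orthogonally with $p_0\in\mathcal{AP}$ and $p_1\in\mathcal{WM}$. The summand $\frac{1}{|F_n|}\sum_{g\in F_n} T_{ag}p_0\cdot T_{bg}q$ tends to $0$ in $L^2(\mu)$ by Lemma \ref{lem:WMCorrelations} (using the finite-index hypothesis on $bG$), so I reduce to the case $p = p_1 \in \mathcal{WM}$, which by $L^2$-approximation I may further assume is bounded (and likewise $q$).

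For this reduced case I apply the van der Corput lemma (Lemma \ref{lem:vdC}) to $x_g := T_{ag}p \cdot T_{bg}q$. Using the $T$-invariance of $\mu$ to translate the integrand by $-ag$,
\[
\langle x_{g+h}, x_g\rangle = \int (T_{ah}p\cdot\bar p)\cdot T_{(b-a)g}(T_{bh}q\cdot\bar q)\,d\mu.
\]
The mean ergodic theorem (Corollary \ref{cor:Ergodic}) for the unitary $G$-action $g\mapsto T_{(b-a)g}$ then gives
\[
\lim_{n\to\infty}\frac{1}{|F_n|}\sum_{g\in F_n}\langle x_{g+h}, x_g\rangle = \int (T_{ah}p\cdot\bar p)\cdot P(T_{bh}q\cdot\bar q)\,d\mu,
\]
where $P$ is the orthogonal projection onto $(b-a)G$-invariant functions. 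Under the ergodicity of $(X,\mu,T)$ (the context relevant to this lemma's use) and the finite-index hypothesis on $(b-a)G$, Lemma \ref{lem:FinIndexFinComponents} and Observation \ref{obs:PIfinite} express $P$ via a finite partition $X = X_1\sqcup\cdots\sqcup X_r$ into $(b-a)G$-ergodic components, reducing the inner limit to
\[
\sum_{i=1}^r \frac{\alpha_i(h)\beta_i(h)}{\mu(X_i)},\quad \alpha_i(h):=\int T_{ah}p\cdot\bar p\cdot 1_{X_i}\,d\mu,\ \beta_i(h):=\int T_{bh}q\cdot\bar q\cdot 1_{X_i}\,d\mu.
\]

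To complete the van der Corput verification I must show $\frac{1}{|F_k|}\sum_{h\in F_k}$ of the above tends to $0$. By Cauchy-Schwarz in $h$ and boundedness of $\alpha_i$, it suffices to show $\frac{1}{|F_k|}\sum_h |\beta_i(h)|^2 \to 0$ for each $i$. Applying Lemma \ref{lem:WM}(i) with $f=q\in\mathcal{WM}$, the correlation $c_i(g):=\int T_g q\cdot(\bar q\cdot 1_{X_i})\,d\mu$ tends to $0$ in density over $G$, and $\beta_i(h)=c_i(bh)$. I expect the principal obstacle to be transferring this density-zero behaviour along the map $h\mapsto bh$: since $\ker b$ may be infinite in a torsion group, the transfer genuinely uses the finite-index hypothesis on $bG$ (equivalently, by Pontryagin duality, the finiteness of $\widehat G[b]\cong\widehat{G/bG}$), which together with the continuity of the spectral measure of $q$ forces the desired $\ell^2$-average to vanish and thereby verifies van der Corput's hypothesis.
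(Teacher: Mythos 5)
Your proof is correct and follows essentially the same route as the paper: van der Corput applied to $x_g = T_{ag}p\,T_{bg}q$, the mean ergodic theorem for the finite-index $(b-a)G$-action (via Lemma \ref{lem:FinIndexFinComponents} and Observation \ref{obs:PIfinite}), and the weak mixing of $q$ to make the averaged correlations vanish. Two minor remarks: the initial splitting $p=p_0+p_1$ is superfluous, since your van der Corput estimate never uses $p\in\mathcal{WM}$ and the paper runs the argument for arbitrary bounded $p$; and the transfer of the density-zero decay along $h\mapsto bh$, which you rightly identify as the delicate point (and correctly tie to the finiteness of $\widehat{G}[b]$), is exactly the step the paper's own proof passes over silently when it invokes Lemma \ref{lem:WM} for $c(h)=\|P_{\mathcal I_{b-a}}(q\,T_{bh}q)\|$.
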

\begin{proof}  We will apply Lemma \ref{lem:vdC} with $x_g=T_{ag}p\, T_{bg}q$.  First we write $\langle x_{g+h}, x_g\rangle_{L^2(\mu)}$ as
  \[
  \int T_{ag+ah}p\cdot T_{bg+bh}q\cdot T_{ag}p\cdot T_{bg}q\, d\mu = \int p T_{ah}p \cdot T_{(b-a)g}(qT_{bh}q)\, d\mu
  \]
  Averaging over $g\in F_n$ and taking the limit, we get
  \begin{align*}
  \lim_{n\to \infty} \frac{1}{|F_n|}\sum_{g\in F_n} \langle x_{g+h}, x_g \rangle &= \int p \, T_{ah}p  \cdot \lim_{n\to \infty} \frac{1}{|F_n|}\sum_{g\in F_n} T_{(b-a)g}(q\,T_{bh}q)\, d\mu \\
  &= \int p\, T_{ah}p \cdot P_{\mathcal I_{b-a}}(q\,T_{bh}q)\, d\mu\\
  &\leq \|pT_{ah}p\|\, \|P_{\mathcal I_{b-a}}(q\, T_{bh}q)\|,
  \end{align*}
where $P_{\mathcal I_{b-a}}$ is the orthogonal projection onto the space of functions invariant under $g\mapsto T_{(b-a)g}$. Since  $T$ is ergodic and $(b-a)G$ has finite index in $G$, Lemma \ref{lem:FinIndexFinComponents} implies this action has finitely many ergodic components.  By Lemma \ref{lem:WM} we have that $c(h):=\|P_{\mathcal I_{b-a}}(q\, T_{bh}q)\|$ tends to $0$ in density, so
\[
\lim_{k\to\infty} \frac{1}{|F_k|}\lim_{n\to \infty} \frac{1}{|F_n|}\sum_{h\in F_k, g\in F_n} \langle x_{g+h}, x_{g} \rangle,
\]
and Lemma \ref{lem:vdC} implies equation (\ref{eqn:agbg}) holds.
\end{proof}

\subsection{Discrete spectrum}

An ergodic $G$-system $(X,\mu,T)$ has \emph{discrete spectrum} if its eigenfunctions span a dense subspace of $L^2(\mu)$.  The Halmos-von Neumann theorem\footnote{See \cite{HvN} for the case where $G=\mathbb Z$, \cite{MackeyRotation} for general groups.} states that every such system is isomorphic to an ergodic group rotation system $(K,m,R)$, meaning $K$ is a compact abelian group with Haar probability measure $m$ and $R$ is given by a homomorphism $\rho:G\to K$ with dense image, meaning $R_g(k) = k+\rho(g)$.

Multiple ergodic averages for ergodic group rotation systems can be computed as follows: if $f_i\in L^\infty(m)$, $(F_n)_{n\in \mathbb N}$ is a F{\o}lner sequence and $c_{i}\in \mathbb Z$, then
\begin{equation}\label{eqn:KroneckerBirkhoff}
  \lim_{N\to\infty} \frac{1}{|F_n|}\sum_{g\in F_n} \prod_{i=1}^df_i(k+c_i\rho(g)) = \int_K \prod_{i=1}^d f_i(k+c_is)\, ds
\end{equation}
in $L^2(m)$; cf. Section 3 of \cite{F77}.

\subsection{Factors}

The only factor we need in this article is the Kronecker factor of an ergodic $G$-system, but we mention the general theory to fix notation.  See \cite{EinsiedlerWard}, \cite{FurstenbergBook}, or \cite{KerrLi} for a general discussion of factors of dynamical systems.

A \emph{factor} of a $G$-system $(X,\mu,T)$ is a $G$-system $(Y,\nu,S)$ together with a measurable map $\pi:X\to Y$ satisfying $\mu(\pi^{-1}B)=\nu(B)$ for all $\nu$-measurable $B$ and $\pi(T_g x)=S_g(\pi(x))$ for $\mu$-almost every $x$.  

To define the Kronecker factor, we need a standard result such as \cite[Theorem 1.7]{KerrLi}, which we paraphrase as follows.

\begin{theorem}\label{thm:FactorFromAlgebra} Let $(X,\mu,T)$ be an ergodic $G$-system.  To every $T$-invariant $\sigma$-algebra $\mathcal A$ of $\mu$-measurable sets, there is a factor $(Y,\nu,S)$ with $\pi:X\to Y$ such that $A\in \mathcal A$ if and only if there is a $\nu$-measurable $B\subset Y$ with $\mu(A\triangle \pi^{-1}B)=0$.  In other words, the elements of $\mathcal A$ are, up to $\mu$-measure $0$, the $\mu$-measurable subsets of the form $\pi^{-1}B$ for $\nu$-measurable $B$.
\end{theorem}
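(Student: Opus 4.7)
The plan is to produce $(Y,\nu,S)$ as a concrete standard Borel $G$-system by choosing a countable family of generators for $\mathcal A$ and mapping $X$ into a product space via their indicator functions. First I would reduce to the case where $(X,\mu)$ is a standard probability space (the usual setting; otherwise one passes first to a separable model). Then $\mathcal A$ is countably generated modulo $\mu$-null sets by some sequence $\{A_n\}_{n\in\mathbb N}\subset \mathcal A$. Because $\mathcal A$ is $T$-invariant and $G$ is countable, we may enlarge the generating sequence to include every $T_g^{-1}A_n$ for $g\in G$ and $n\in\mathbb N$, obtaining a still-countable generating family $\{A_m'\}_{m\in\mathbb N}$ closed under the $T$-action at the level of the measure algebra.

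Next I would define $\pi_0\colon X\to \{0,1\}^{\mathbb N}$ by $\pi_0(x)=(1_{A_m'}(x))_{m\in\mathbb N}$, set $Y:=\{0,1\}^{\mathbb N}$ equipped with its product Borel $\sigma$-algebra $\mathcal B_Y$, and define $\nu:=(\pi_0)_*\mu$. The key verification at this stage is that $\pi_0^{-1}\mathcal B_Y$ coincides with $\mathcal A$ modulo $\mu$-null sets: the cylinder sets of $Y$ pull back to the generators $A_m'$, so $\pi_0^{-1}\mathcal B_Y \subset \mathcal A$ modulo null sets, and conversely each $A\in\mathcal A$ is approximated in $L^1(\mu)$ by Boolean combinations of the $A_m'$, hence lies in $\pi_0^{-1}\mathcal B_Y$ modulo null sets. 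This already delivers the ``iff'' characterization claimed in the theorem.

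The remaining work is to furnish a $G$-action $S$ on $(Y,\nu)$ so that $\pi_0(T_g x)=S_g\pi_0(x)$ for $\mu$-a.e.\ $x$. By the closure of our generating family under $T$, for each $g\in G$ and each index $m$ there is a (unique mod $\nu$-null) Borel map on $Y$ whose pullback realizes the shift of the $m$-th coordinate. Stitching these together gives, for each $g$, a measure-preserving transformation $S_g$ on $(Y,\nu)$ that is initially defined only on the measure algebra. One then invokes Mackey's point-realization theorem for measurable actions of countable groups on standard Borel spaces to upgrade $g\mapsto S_g$ to an honest measurable $G$-action on a conull Borel subset of $Y$, after which the factor equations hold $\mu$-a.e.

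The main obstacle is this last passage from an action on the measure algebra to a genuine pointwise $G$-action: the relations $\pi_0\circ T_g = S_g\circ\pi_0$ and $S_{g+h}=S_g\circ S_h$ hold a priori only on sets of full $\mu$- or $\nu$-measure, with a different null set for each $g$ and each pair $(g,h)$. The cure exploits countability of $G$: a countable union of null sets is still null, so after deleting a single $\nu$-conull Borel set we obtain simultaneous validity of the action identity and the factor equation. Everything else (measurability of $\pi_0$, the pushforward identity $\mu\circ\pi_0^{-1}=\nu$) is routine from the construction.
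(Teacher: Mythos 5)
The paper does not prove this statement at all: it is quoted as a standard result, with a citation to Theorem 1.7 of Kerr--Li, so there is no in-paper argument to compare against. Your construction is the standard proof of that cited result, and it is essentially correct: pass to a separable model, take a countable generating family for $\mathcal A$ modulo null sets, close it under the (countable) $G$-action, map into $\{0,1\}^{\mathbb N}$ by indicators, push the measure forward, and check that the pullback $\sigma$-algebra recovers $\mathcal A$ modulo null sets. One simplification worth noting: once the generating family $\{A_m'\}$ is closed under $g\mapsto T_g^{-1}$ at the level of the measure algebra, each $T_g^{-1}A_m'$ agrees modulo null sets with some $A_{\sigma_g(m)}'$, so you can define $S_g$ \emph{directly} as the Borel coordinate-rearrangement $S_g(y)_m = y_{\sigma_g(m)}$ on $Y=\{0,1\}^{\mathbb N}$; the factor relation and the action identity then each fail only on countably many null sets, which you discard. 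This makes the appeal to Mackey's point-realization theorem unnecessary (though not wrong) for countable $G$, and it removes the only soft spot in your write-up, namely the somewhat vague step where $S_g$ is ``stitched together'' from maps defined only on the measure algebra.
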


\subsection{The Kronecker factor} Every ergodic $G$-system $(X,\mu,T)$ has a factor $(K,m,R)$ with factor map $\pi:X\to K$ satisfying the following two properties:
\begin{enumerate}
\item[(i)] $(K,m,R)$ is an ergodic group rotation $G$-system.

\item[(ii)] Every bounded $f\in \mathcal AP$ is equal $\mu$-almost everywhere to a function of the form $\tilde{f}\circ \pi$.
\end{enumerate}
With the identification in (ii), we have
\begin{equation}\label{eqn:Kronecker}
  f(T_gx) = \tilde{f}(\pi(x)+\rho(g)) \qquad \text{for } \mu\text{-a.e.} x, \text{ for all } g\in G.
\end{equation}
This factor can be obtained as the factor associated by Theorem \ref{thm:FactorFromAlgebra} to the smallest sub-$\sigma$-algebra of $(X,\mu,T)$ with respect to which every eigenfunction of $T$ is measurable.

See \cite{F77}, \cite{FurstenbergBook}, or \cite{EinsiedlerWard} for discussion of the Kronecker factor.  In \cite{BFApprox} the Kronecker factor is discussed as the ``Kronecker-Mackey factor.''

We will need two standard properties of the projection map $P_{\mathcal{AP}}:L^2(\mu)\to \mathcal{AP}$.
\begin{enumerate}
\item[] (Positivity) if $f\geq 0$ $\mu$-almost everywhere then $P_{\mathcal{AP}}\geq 0$ $\mu$-almost everywhere, and if $f:X\to [0,1]$, then $P_{\mathcal{AP}}f(x)\in [0,1]$ for $\mu$-almost every $x$.

\item[] (Integrals are preserved) $\int P_{\mathcal{AP}}f\, d\mu = \int f\, d\mu$ for all $f\in L^2(\mu)$.
\end{enumerate}
These both follow from the fact that $\mathcal AP$ is a norm-closed algebra of functions in $L^2(\mu)$ containing the constants.

\subsection{Proof of Theorem \ref{thm:DynamicalVersion}}\label{sec:ProofOfErgodicVersion}

We recall the statement of Theorem 2.2:

Let $(X,\mu,T)$ be an ergodic measure preserving $G$-system and let $(c_1,\dots,c_d)\in \mathbb Z^d$ be coefficients satisfying hypotheses (1)-(3) of Theorem \ref{thm:GeneralBogoliouboff}.  If $f_1,\dots, f_d:X\to [0,1]$ are measurable functions, $L:G^d\to [0,1]$ is defined as
\begin{equation}\label{eqn:Ldef2}
L(g_1,\dots,g_{d-1};t):= \int T_{t-\sum_{j<d} c_jg_j}f_{d}\prod_{j< d} T_{c_dg_j}f_{j} \, d\mu
\end{equation}
 and $I(t)$ is defined as the iterated limit
\begin{equation}\label{eqn:defIg2}
I(t):=\lim_{n_1\to \infty} \dots  \lim_{n_{d-1}\to \infty} \frac{1}{|F_{n_1}|\cdots |F_{n_{d-1}}|} \sum_{\substack{g_1\in F_{n_1}\\ \dots \\ g_{d-1}\in F_{n_{d-1}}}}L(g_1,\dots,g_{d-1};t)
\end{equation}
 then there are \begin{enumerate}
\item[$\bullet$] a compact metrizable abelian group $K$ with Haar measure $m$,

\item[$\bullet$] a homomorphism $\rho:G\to K$ having $\overline{\rho(G)}=K$,

\item[$\bullet$] functions $\tilde{f}_i:K\to [0,1]$ with $\int \tilde{f}_i\, dm = \int f_i\, d\mu $ for $i\leq d$
\end{enumerate}
such that
\begin{equation}\label{eqn:IgReduction2}
I(t):=\int \tilde{f}_d\Bigl(k+\rho(t)-\sum_{j<d}c_js_j\Bigr) \prod_{j<d} \tilde{f}_j(k+c_ds_j) \, dk\, ds_1\, \dots\, ds_{d-1}.
\end{equation}
Furthermore, if $f_1=\cdots = f_d$  then $\tilde{f}_1=\cdots = \tilde{f}_d$.

\begin{proof} Let $(K,m,R_\rho)$ be the Kronecker factor of $(X,\mu,T)$ with factor map $\pi:X\to K$. Let $\mathcal{AP}$ denote the set of $T$-almost periodic functions, so that every bounded $f\in \mathcal{AP}$ can be written as $\tilde{f}\circ \pi$, where $\tilde{f}\in L^\infty(m)$.

The main step of the proof is to show that the innermost limit in the definition of $I_g$ is unaffected when $f_d$ is  replaced by $P_{\mathcal{AP}}f_d$.  To prove this it suffices to prove that the innermost limit is $0$ when $P_{\mathcal{AP}}f_{d}=0$, i.e. when $f_{d}\in \mathcal{WM}$.  Assuming $f_{d}\in \mathcal{WM}$, we write the innermost average in $I_t$ as
\[
\frac{1}{|F_{n}|} \sum_{g_{d-1}\in F_n} \int T_{t-\sum_{j<d}c_jg_j}f_{d} \cdot T_{c_dg_{d-1}}f_{d-1} \prod_{j<d-1} T_{c_dg_j}f_{j}\, d\mu.
\]
We rewrite the first factor $T_{t-\sum_{j<d}c_jg_j}f_{d}$ as $T_{c_{d-1}g_{d-1}}f_d'$, where $f_d':=T_{t-\sum_{j<d-1}c_jg_j}f_{d}$, so the above expression can be written as
\begin{equation}\label{eqn:parens}
\int \Bigl(\frac{1}{|F_{n}|} \sum_{g_{d-1}\in F_n}T_{-c_{d-1}g_{d-1}}f_d'\cdot T_{c_dg_{d-1}}f_{d-1} \Bigr) \prod_{j<d-1} T_{c_dg_j}f_{j}\, d\mu.
\end{equation}
Noting that $(c_d-(-c_{d-1}))G$ has finite index in $G$ by hypothesis (\ref{hyp:cicj}) in Theorem \ref{thm:GeneralBogoliouboff}, we may apply Lemma \ref{lem:RothReduce} to the average in parentheses in (\ref{eqn:parens}) and conclude that it converges to $0$ in $L^2(\mu)$ as $n\to \infty$.

The same argument shows that $f_{d-1}$ in (\ref{eqn:defIg2}) can be replaced by $P_{\mathcal AP}f_{d-1}$ without affecting the innermost limit.

Thus we may assume $f_{j}\in \mathcal{AP}$ for $j = d$ and $d-1$.  Under this assumption, Lemma \ref{lem:WMCorrelations} allows us to replace the remaining $f_j$, $1\leq j < d-1$ with $P_{\mathcal{AP}} f_j$ without affecting the limit in (\ref{eqn:defIg2}).

Having replaced each $f_j$ in (\ref{eqn:defIg2}) with $P_{\mathcal{AP}} f_j$, we write $P_{\mathcal{AP}}f_j$ as $\tilde{f}_j\circ \pi$, and note that equation (\ref{eqn:Kronecker}) means $T_hP_{\mathcal{AP}} f_j = \tilde{f}_j(\pi(x)+\rho(h))$ for $\mu$-almost every $x$. We can compute the innermost limit in (\ref{eqn:defIg2}) by (\ref{eqn:KroneckerBirkhoff}) to get
\begin{align*}
  &\lim_{n_{d-1}\to \infty} \frac{1}{|F_{n_{d-1}}|}  \sum_{g_{d-1}\in F_{n_{d-1}}} \int \tilde{f}_d(k+\rho(g-\sum_{j<d} c_jg_j))\prod_{j<d} \tilde{f}_j(k+\rho(c_dg_j))\, dk\\
  &= \int \tilde{f}(k+\rho(g-\sum_{j<d-1} c_jg_{j-1})+c_{d-1}s_{d-1})\prod_{j<d-1} \tilde{f}_j(k+\rho(c_dg_j))\, \tilde{f}_{d-1}(k+c_ds_{d-1})\, dk\, ds_{d-1}.
\end{align*}
The same computation for averages over $g_{j}\in F_{n_j}$, $j=d-2$, $j=d-3$, $\dots$, $j=1$, will simplify the remaining limits in (\ref{eqn:defIg}) to obtain (\ref{eqn:IgReduction2}).

With our definition of the $\tilde{f}_i$ we get that $\tilde{f}_i=\tilde{f}_j$ whenever $f_i=f_j$. \end{proof}

\frenchspacing
\bibliographystyle{plain}
\bibliography{BohrNhoodGeneralBib}

\end{document}